\newcommand{\CC}{{\mathbb C}}
\newcommand{\HH}{{\mathbb H}}
\newcommand{\BB}{{\mathbb B}}
\newcommand{\ZZ}{{\mathbb Z}}
\newcommand{\ov}{\overline}
\newcommand{\p}{\partial}
\newcommand{\w}{\widetilde}
\newtheorem{thm}{Theorem}[section]
\newtheorem{lem}[thm]{Lemma}
\newtheorem{prop}[thm]{Proposition}
\numberwithin{equation}{section}
\begin{document}
\title{\bf D'Angelo conjecture in the third gap interval}
\date{}
\author{Shanyu Ji\footnote{Supported in part by NSFC-11571260}
\ and Wanke Yin\footnote{Supported in part by NSFC-11722110 and
NSFC-11571260.} } \maketitle

\abstract{ We show the D'Angelo conjecture holds in the third gap
interval. More precisely, we prove that the degree of any rational
proper holomorphic map from $\mathbb{B}^n$ to $\mathbb{B}^{4n-6}$
with $n\geq 7$ is not more than $3$.}

\section{Introduction}

Let $\BB^n=\{z\in\CC^n\ |\ |z|<1\}$ be the unit ball in $\CC^n$ and
denote by $Rat(\BB^n, \BB^N)$ the set of all proper holomorphic
rational maps from  $\BB^n$ to  $\BB^N$. We say that $f, g\in
Rat(\BB^n, \BB^N)$ are {\it holomorphically equivalent} (or {\it
equivalent}, for short) if there are $\sigma\in Aut(\BB^n)$ and
$\tau\in Aut(\BB^N)$ such that $f=\tau\circ g\circ \sigma$. By a
well-known result of Cima-Suffridge \cite{CS}, $F$ extends
holomorphically across the boundary $\p {\BB}^n$.

For the equal dimensional case $N=n$, Alexander \cite{A77} proved
that $Rat(\BB^n, \BB^n)$ must be automorphisms for $n>1$.
Subsequently, much effort has been paid to the classification of
$Rat(\BB^n, \BB^N)$ with $N>n$. When $N / n$ is not too large, it
turns out that the maps are equivalent to relatively simple ones. In
fact, the classification problem had been done for $N\leq 3n-3$ and
the maps turn out to be all monomial maps. The systematic
investigations on the precise classification of $Rat(\BB^n, \BB^N)$
can be found in the work of
\cite{F82,Hu99,HJ01,Ha05,HJX06,CJY18,JY18}, etc. In \cite{FHJZ10},
Faran-Huang-Ji-Zhang constructed a family of  maps in $Rat(\BB^n,
\BB^{3n-2})$, which cannot be equivalent to any polynomial maps.
This indicates 
that the maps could be quite
complicated when $N\geq 3n-2$.

\medspace

To study maps in $Rat(\BB^n, \BB^N)$ , there are two geometric
problems which are of fundamental importance. The first one is the
D'Angelo conjecture.
For any rational holomorphic map $H=\frac{(P_1, ..., P_m)}{Q}$ on
$\CC^n$ where $P_j, Q$ are holomorphic polynomials with $(P_1, ...,
P_m, Q)=1$, the {\it degree} of $H$ is defined, as in algebraic
geometry,  to be $deg(H):=\max\{\deg(P_j), deg(Q),  1\le j\le m\}$.
The D'Angelo conjecture is as follows: For any $F\in Rat(\BB^n,
\BB^N)$, it should have
\[
deg(F) \le \begin{cases}
2N-3,\ & if\ n=2,\\
\frac{N-1}{n-1},\ & if\ n\ge 3.
\end{cases}
\]
There are several partial results supporting this conjecture. The
conjecture is true for all monomial maps, as demonstrated   by
D'Angelo-Kos-Riehl \cite{DKR03} for the case  $n=2$ and
by  Lebl-Peter \cite{LP12} for the case  $n \ge 3$. If $F$ is a
rational map with geometric rank one, this conjecture was proved in
\cite[corollary 1.3]{HJX06}. If the conjecture is proved, it would
be sharp due to the known examples. Also, it is proved that
$deg(F)\le \frac{N(N-1)}{2(2n-3)}$ holds for any $F\in Rat(\BB^n,
\BB^N)$ with $n=2$ in \cite{Me06} and with $n\ge 2$ in \cite{DL09}.

\medskip

Another geometric problem is the gap conjecture.
For any integer $k$ with $1\le k\le K(n)$ where $K(n):=\max\{t\in
\ZZ^+\ |\ \frac{t(t+1)}{2}<n\}$, we recall the {\it gap interval}
(cf. \cite{HJY09})
\begin{equation}
{\cal I}_k:=\bigg(k n,\ (k+1)n - \frac{k(k+1)}{2} \bigg).
\end{equation}
The  {\it gap conjecture}, first raised in \cite{HJY09}, is stated
as follows: Any proper holomorphic rational map $F\in  Rat(\BB^n,
\BB^N)$ where $n\ge 3$ is equivalent to a map of the form $(G, 0')$
where $G\in Rat(\BB^n, \BB^{N'})$ where $N'<N$ if and only if $N\in
{\cal I}_k$ for some $1\le k\le K(n)$. The gap conjecture  for the
cases of $k=1,2,3$ have been proved \cite{Hu99,HJ01,HJY14}.
Recently, P. Ebenfelt \cite{Eb16} proposed a SOS conjecture (i.e.,
the Sums of Squares of Polynomial conjecture)  and proved that if
the SOS conjecture is true, then it implies the gap conjecture.

\medskip

The first gap interval is ${\cal I}_1=(n, 2n-1)$. Huang \cite{Hu99}
showed that $deg(F)=1$ if $N\in {\cal I}_1$.  When $N=2n-1$, it was
proved by Faran \cite{F82} that $deg(F)\le 3$ for $n=2$ and by
Huang-Ji \cite{HJ01}  that $deg(F)\le 2$ for $n>2$. The second gap
interval is ${\cal I}_2=(2n, 3n-3)$. When $N \le 3n-3$ (and $n\ge
4$), we know from \cite{AHJY15} that $deg(F)\le 2$. These results
confirm the D'Angelo conjecture for the first and the second gap
intervals.

\medskip

The third gap interval is ${\cal I}_3=(3n, 4n-6)$. If D'Angelo
conjecture is true, we would have $deg(F)\le 3$ for any $F\in
Rat(\BB^n, \BB^{4n-6})$ because $deg(F)\le
\frac{4n-6-1}{n-1}=4-\frac{3}{n-1}$. This is confirmed by our main
result of this paper as follows.

\medskip

\begin{thm}\label{mainthm}
    If $F\in Rat(\BB^n, \BB^{4n-6})$ with $n \ge 7$, then $deg(F)\le 3$.
\end{thm}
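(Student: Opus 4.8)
The plan is to analyze $F$ through its boundary behavior and Huang's theory of the geometric rank. First I would invoke Cima--Suffridge \cite{CS} to extend $F$ holomorphically past $\p\BB^n$, then pass to the Heisenberg model by the Cayley transform, so that $F$ becomes a proper map of the associated Siegel domains. Composing with automorphisms of source and target, I would bring $F$ into Huang's normal form at a boundary point at which the geometric rank is attained: writing $F=(f,\phi,g)$ with $f=(f_1,\dots,f_{n-1})$, $\phi=(\phi_{jk})$, and $g$ the distinguished last component, one has $f_l=z_l+\frac{i\mu_l}{2}z_lw+\cdots$, $\phi_{jk}=\mu_{jk}z_jz_k+\cdots$, $g=w+\cdots$, where the Hermitian matrix $(\mu_{jk})$ is diagonalized and its rank is the geometric rank $\kappa_0$.

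The first real step is to bound $\kappa_0$. The $\phi$-block carries $\kappa_0(n-1)-\frac{\kappa_0(\kappa_0-1)}{2}$ independent components, so together with the $n-1$ components of $f$ and the single component $g$ the map already occupies $(\kappa_0+1)n-\frac{\kappa_0(\kappa_0+1)}{2}$ of the $N=4n-6$ target coordinates. Since this count equals $5n-10>4n-6$ when $\kappa_0=4$ and $n\ge 5$, the rank must satisfy $\kappa_0\le 3$. When $\kappa_0\le 2$ the same count shows that only $3n-3$ target components are essential, so the rank theory reduces $F$ to a map into $\BB^{3n-3}$, and the second-gap theorem of \cite{AHJY15} (with \cite{HJ01} in the sphere case, and \cite{HJX06} for $\kappa_0=1$) yields $deg(F)\le 2\le 3$. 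This isolates the genuinely new case $\kappa_0=3$, which is exactly the case $(\kappa_0+1)n-\frac{\kappa_0(\kappa_0+1)}{2}=4n-6=N$, where the target is filled precisely by the essential components.

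For $\kappa_0=3$ I would exploit the basic identity $\mathrm{Im}\,g=|f|^2+|\phi|^2+|R|^2$ on $\p\HH^n$, where $R$ collects the remaining target components, and expand it in the weighted grading ($z$ of weight $1$, $w$ of weight $2$). The weight-$2$ data are exactly the $\phi_{jk}$; at each successive weight one polarizes the identity and solves for the next homogeneous piece of $F$, as in \cite{HJX06,HJY14}. A monomial of degree $\ge 4$ in $F$ forces, through this recursion, a family of new, mutually linearly independent components in $R$; a careful count shows that already the degree-$4$ part would demand strictly more than $4n-6$ independent target coordinates once $n\ge 7$. Since only $4n-6$ are available, every homogeneous piece of weighted degree exceeding that of the cubic terms must vanish, so $F$ is a polynomial map of degree at most $3$; the denominator $Q$ is controlled simultaneously, as the normal form and the properness equation bound $\deg Q$ by the degree of the numerator.

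The principal obstacle is precisely this final dimension count in the rank-$3$ case: one must show that the degree-$4$ monomials produced by the recursion cannot be absorbed into the available target space, i.e.\ that they are forced to be linearly independent rather than collapsing through relations inherited from the lower-order structure. This is where the hypothesis $n\ge 7$ enters, providing enough room among the $z$-variables for the independence argument while keeping $5n-10>4n-6$; the bookkeeping of the $\sim 3n$ components and their quadratic interactions in the properness identity is the technical heart of the proof.
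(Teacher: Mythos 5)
Your reduction of the geometric rank to $\kappa_0\le 3$ matches the paper (it is Huang's semi-rigidity bound $N\ge n+\frac{(2n-\kappa_0-1)\kappa_0}{2}$ from \cite{Hu03}), but after that your argument has a fatal gap: you have the easy and hard cases exactly backwards. Your claim that ``when $\kappa_0\le 2$ the same count shows that only $3n-3$ target components are essential, so the rank theory reduces $F$ to a map into $\BB^{3n-3}$'' is false. The count $n+\frac{(2n-\kappa_0-1)\kappa_0}{2}$ is a \emph{lower} bound on $N$, not an upper bound on the dimension actually occupied by the image. In the normal form of a rank-$2$ map into $\BB^{4n-6}$ there remain $n-3$ extra components $\Phi_1=(\phi_{33},\dots,\phi_{3(n-2)})$ which are $O_{wt}(3)$ but need not vanish, so a rank-$2$ map is in general \emph{not} equivalent to a map into $\BB^{3n-3}$, and the second-gap theorem of \cite{AHJY15} does not apply. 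Whether the cubic jets $\big(\Phi^{***}_{q}\big)_1^{(3,0)}$ vanish identically as $q$ varies along $\p\HH_n$ is precisely the dichotomy the paper has to resolve: if they do vanish, the degeneracy rank is $\le 2$ and a theorem of \cite{CJY18} gives $deg(F)\le 2$; if they do not, one needs the long Chern--Moser/Segre-variety computation of Proposition \ref{propdeg} (the case analysis A--D occupying Sections 4--7), combined with the Huang--Ji degree lemma, to get $deg(F)\le 3$. Your proposal asserts, in effect, that the first horn of this dichotomy always holds, which is exactly what fails; the rank-$2$ case is the entire content of the theorem and your argument leaves it unproven.

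Conversely, the case $\kappa_0=3$ that you treat as ``the genuinely new case'' is the one the paper dispatches by citation: $N=4n-6$ is the upper boundary point of the third gap interval, and \cite[Theorem 1.1]{JY18} gives $deg(F)\le 2$ there. Your sketch for rank $3$ (``a careful count shows that the degree-$4$ part would demand strictly more than $4n-6$ independent target coordinates'') is not a proof --- the forced-linear-independence claim is precisely the kind of statement that requires the full weighted-jet machinery --- but since that case is covered by the literature, the real defect of the proposal is the missing rank-$2$ argument, not this sketch.
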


The rest of the paper is organized as follows. In Section 2, we
introduced some known properties for Rat$(\HH_n,\HH_N)$, especially
for maps of geometric rank $2$. Section 3 was devoted to the proof
of our main theorem assuming Proposition \ref{propdeg}. In Sections
4-7, we gave a detailed proof of Proposition \ref{propdeg} according
to four different cases.

\medspace


\medskip

\section{Preliminaries}


Let $\HH_n=\{(z, w)\in\CC^{n-1}\times \CC\ |\ \text{Im}(w)>|z|^2\}$
be the Siegel upper half space and denote by $Rat(\HH_n, \HH_N)$ the
set of all proper holomorphic rational maps from $\HH_n$ to $\HH_N$.
By the Cayley transform, we can identify $\BB^n$ with  $\HH_n$ and
identify $Rat(\BB^n,\BB^N)$ with $Rat(\HH_n, \HH_N)$. In what
follows, we will prove Theorem \ref{mainthm} through the properties
of $Rat(\HH_n, \HH_N)$.

Let $F=(f,\phi,g)=(\widetilde{f}, g)= (f_1,\cdots$, $f_{n-1}$,
$\phi_1,\cdots$, $\phi_{N-n},g)\in Rat(\HH_n, \HH_{N})$. For each
$p\in \p\HH_n$, define $\sigma^0_p\in \hbox{Aut}(\HH_n)$ and
$\tau^F_p\in\hbox{Aut}(\HH_N)$ as follows:
\begin{equation*}\begin{split}
&\sigma^0_p(z,w)=(z+z_0, w+w_0+2i \langle z,\overline{z_0}
\rangle),\\
&\tau^F_p(z^*,w^*)=(z^*-\widetilde{f}(z_0,w_0),w^*-\overline{g(z_0,w_0)}-
2i \langle z^*,\overline{\widetilde{f}(z_0,w_0)} \rangle).
\end{split}\end{equation*}
 Then
$F$ is equivalent to $F_p:=\tau^F_p\circ F\circ
\sigma^0_p=(f_p,\phi_p,g_p)$ and  $F_p(0)=0$. In \cite{Hu99}, Huang
constructed an automorphism $\tau^{**}_p\in
  {Aut}_0({\HH}_N)$ such that
$F_{p}^{**}:=\tau^{**}_p\circ F_p$ satisfies the following
normalization:
\[
f^{**}_{p}=z+{\frac{i}{ 2}}a^{**(1)}_{p}(z)w+o_{wt}(3),\ \phi_p^{**}
={\phi_p^{**}}^{(2)}(z)+o_{wt}(2), \ g^{**}_{p}=w+o_{wt}(4).
\]

Write
$\mathcal{A}(p):=-2i(\frac{\partial^2(f_p)^{\ast\ast}_l}{\partial
z_j
    \partial w}|_0)_{1\leq j,l\leq n-1}$. The {\it geometric rank} of $F$ at
$p$ is defined to be  the rank of the $(n-1)\times (n-1)$ matrix
$\mathcal{A}(p)$, which is denoted by $Rk_F(p)$. Now we define the
{\it geometric rank} of $F$ to be $\kappa_0(F)=max_{p\in
\partial\HH_n} Rk_F(p)$. \medspace

When a map in $ Rat({\HH}_n,{\HH}_N)$ is not of full rank (i.e.,
$\kappa_0\le n-2$),  by the works of \cite{Hu03} and \cite{HJX06},
it can further be normalized to the following form:
\begin{thm}
    \label{normalize **}
    Suppose that $F\in Rat({\HH}_n,{\HH}_N)$
    has geometric rank  $1\le\kappa_0\le n-2$ with $F(0)=0$. Then there are
    $\sigma\in \hbox{Aut}({\HH}_n)$ and
    $\tau\in \hbox{Aut}({\HH}_N)$ such that
    $\tau\circ F\circ \sigma $ takes
    the following form, which is still denoted by $F=(f,\phi,g)$ for
    convenience of notation:

    \begin{equation}
    \left\{
    \begin{array}{l}
    f_l=\sum_{j=1}^{\kappa_0}z_jf_{lj}^*(z,w),\ l\le\kappa_0,\\
    f_j=z_j,\  \text{for} \ \kappa_0+1\leq j\leq n-1,\\
    \phi_{lk}=\mu_{lk}z_lz_k+\sum_{j=1}^{\kappa_0}z_j\phi^*_{lkj}\ \text{for
    } \ \ (l,k)\in {\cal S}_0,\\
    \phi_{lk}=O_{wt}(3),\ \ (l,k)\in {\cal S}_1,\\
    g=w;\\
    f_{lj}^*(z,w)=\delta_l^j+\frac{i\delta_{l}^j\mu_l}{2}w+b_{lj}^{(1)}(z)w+O_{wt}(4),\\
    \phi^*_{lkj}(z,w)=O_{wt}(2),\ \  (l,k)\in {\cal S}_0,\\
    \phi_{lk}=\sum_{j=1}^{\kappa_0}z_j\phi_{lkj}^*=O_{wt}(3)\ \ for\
    (l,k)\in {\cal S}_1.
    \end{array}\right.
    \label{eqn:hao}
    \end{equation}
    Here, for $1\le \kappa_0\le n-2$, we write ${\cal S} ={\cal S}_0\cup
    {\cal S}_1$, the index set  for all components of $\phi$, where
    ${\cal S}_{0}=\{(j,l): 1\le j\leq \kappa_0, 1\leq l\leq n-1, j\leq
    l\}$, $ {\cal S}_1=\Big\{(j, l): j=\kappa_0+1, \kappa_0+1\le l \le \kappa_0 +
    N-n-\frac{(2n-\kappa_0-1)\kappa_0}{2} \Big\}$, and
    \begin{equation}
    \label{mui and mujk} \mu_{jl}=\begin{cases}\sqrt{\mu_j+\mu_l} &\
    for\ j<l\le
    \kappa_0; \\
    \sqrt{\mu_j} & if \ j\le \kappa_0 < l\ or\ if\ j=l\le \kappa_0.
    \end{cases}
    \end{equation}
\end{thm}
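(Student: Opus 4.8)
The plan is to construct the normal form \eqref{eqn:hao} by repeatedly exploiting the automorphism freedom, starting from the pre-normalized map $F^{**}_p$ recalled just above the statement, and organizing every computation by weighted degree, where $z_j$ is assigned weight $1$ and $w$ weight $2$. Throughout, the only analytic input is the basic proper-map identity
\begin{equation*}
\text{Im}(g)=|f|^2+|\phi|^2\quad\text{on}\ \partial\HH_n,
\end{equation*}
obtained by restricting $\text{Im}(w^*)=|z^*|^2$ to the image of $\partial\HH_n$; matching its Taylor coefficients order by order is what pins down each term below.

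First I would fix a boundary point $p$ at which the geometric rank is attained, so that $\mathcal{A}(p)$ has rank exactly $\kappa_0$, and pass to $F^{**}_p$. The coefficient matrix of $a^{**(1)}_p(z)$ is, up to the normalizing factor, $\mathcal{A}(p)$, and the reality of the defining identity forces it to be Hermitian and positive semidefinite. A unitary rotation $z\mapsto Uz$, which extends to an element of $\hbox{Aut}(\HH_n)$ preserving the Siegel structure, diagonalizes it to $\hbox{diag}(\mu_1,\dots,\mu_{\kappa_0},0,\dots,0)$ with each $\mu_j>0$. This produces the leading behavior $f_l=z_l+\tfrac{i}{2}\mu_l z_l w+O_{wt}(4)$ for $l\le\kappa_0$ and fixes the eigenvalues $\mu_j$ appearing in \eqref{mui and mujk}.

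The heart of the argument is the exact rigidity $f_j=z_j$ for $\kappa_0+1\le j\le n-1$, and I expect this to be the main obstacle. Because the last $n-1-\kappa_0$ rows of $\mathcal{A}(p)$ vanish identically, the corresponding components of $f$ carry no second fundamental form in those directions; the content of \cite{Hu03} is that this vanishing at the level of the $2$-jet propagates to all orders through a semilinearity argument, so those components must be exactly the identity rather than merely the identity plus higher-order corrections. This step uses the full strength of Huang's rigidity machinery and cannot be reduced to finite-jet bookkeeping.

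With $f$ and $g$ in place, I would substitute into the basic identity and compare coefficients of the appropriate weighted degree. The weight-four comparison determines the quadratic part of $\phi$: the cross terms coming from $z_l\overline{\mu_l z_l w}$ in $|f|^2$ must be absorbed by $|\phi^{(2)}|^2$, which forces each relevant component of $\phi$ to contain a term $\mu_{lk}z_lz_k$ with $|\mu_{lk}|^2=\mu_l+\mu_k$ (respectively $\mu_l$), giving exactly \eqref{mui and mujk}. A dimension count then shows that the slots indexed by ${\cal S}_0$ account for precisely these quadratic terms, while the remaining slots, indexed by ${\cal S}_1$, must vanish through weighted order two, i.e. be $O_{wt}(3)$. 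Finally, iterating the normalization and observing that after subtracting the identified leading terms every surviving component is divisible by $z_1,\dots,z_{\kappa_0}$, I would extract the factorizations $f_l=\sum_{j=1}^{\kappa_0}z_jf^*_{lj}$ and $\phi_{lk}=\mu_{lk}z_lz_k+\sum_{j=1}^{\kappa_0}z_j\phi^*_{lkj}$ together with the refined expansions of $f^*_{lj}$ and $\phi^*_{lkj}$; this is the organizational content of \cite{HJX06} and amounts to careful weight tracking once the rigidity above is secured.
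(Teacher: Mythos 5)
Your proposal is correct and coincides in substance with the paper's own treatment, because the paper offers no proof of this normalization theorem at all: it is quoted as a known result, with the normalization attributed precisely to \cite{Hu03} and \cite{HJX06}. Your outline --- unitary diagonalization of the Hermitian positive semidefinite matrix $\mathcal{A}(p)$ at a point where the rank $\kappa_0$ is attained, Huang's semi-rigidity machinery from \cite{Hu03} giving $g=w$ exactly, $f_j=z_j$ for $\kappa_0+1\le j\le n-1$, and the divisibility of the remaining components by $z_1,\dots,z_{\kappa_0}$, followed by the weight-four sum-of-squares comparison that produces \eqref{mui and mujk} and forces the ${\cal S}_1$-components to be $O_{wt}(3)$ --- is an accurate reconstruction of the strategy of those references, and it defers exactly the same hard core (the semi-rigidity induction, which indeed cannot be reduced to finite-jet bookkeeping) to exactly the same sources the paper cites.
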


Write $\phi=(\Phi_0, \Phi_1)$ where $\Phi_0=(\phi_{lk})_{(l,k)\in
{\cal S}_0}$, $\Phi_1=(\phi_{lj})_{(l,k)\in{\cal S}_1}$. Define
$\Phi_0^{(1,1)}(z)=\sum^{\kappa_0}_{j=1} e_j z_j$,
$\Phi_1^{(1,1)}(z)=\sum^{\kappa_0}_{j=1} \hat{e_k}z_j$, $e^*_j=(e_j,
\hat e_j)$, and $\xi_j(z)=\ov{e_j}\cdot \Phi^{(2,0)}_0(z)$. Here we
use notation $\Phi^{(s,t)}(z)$ to denote a homogeneous polynomial of
degree $s$ in $z$, and
\begin{equation*}
\begin{split}
&\Phi^{(s,t)}(z, w)=\Phi^{(s,t)}(z) w^t,\
\Phi^{(s,t)}(z)=\sum\limits_{s_1+\cdots+s_{n-1}=s}
\Phi^{(s_1I_1+\cdots+s_{n-1}I_{n-1}+tI_n)}z_1^{s_1}\cdots
z_{n-1}^{s_{n-1}}.
\end{split}
\end{equation*}

Let  $F$ be as in Theorem \ref{normalize **}. By \cite[Corollary
3.4]{HJY14}, we have the following explicit expression for
$\Phi^{(3,0)}_1(z)$.
\begin{lem}
    \label{phi30} Let $\kappa_0\ge 2$ and $(\kappa_0+1)n-\kappa_0
    \le N\le (\kappa_0+2)n - \kappa_0^2 -2$. Then, after applying
    a unitary transformation to the $\Phi_1$-components,
    $F$ is equivalent to another map such that the new map
    (still denoted as $F$) has the property
    \begin{equation}
    \label{Phi(3,0)1}
    \Phi^{(3,0)}_1(z)=\bigg(\frac{2}{\sqrt{\mu_j+\mu_l}}
    \bigg[\sqrt{\frac{\mu_j}{\mu_l}}z_j \xi_l -
    \sqrt{\frac{\mu_l}{\mu_j}} z_l \xi_j\bigg],\ \ 0'  \bigg)_{1\le
        j<l\le \kappa_0}.
    \end{equation}
\end{lem}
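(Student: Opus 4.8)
The plan is to read off the squared norm of $\Phi^{(3,0)}_1$ from the proper-mapping equation and then to remove the residual freedom by a unitary rotation of the $\Phi_1$-slots. Since $F=(f,\phi,g)$ is in the normalized form of Theorem~\ref{normalize **}, in particular $g=w$, the condition $F(\p\HH_n)\subseteq\p\HH_N$ reduces to
\[
|z|^2=|f(z,w)|^2+|\Phi_0(z,w)|^2+|\Phi_1(z,w)|^2\quad\text{on }\ \mathrm{Im}\,w=|z|^2 .
\]
I would complexify $\ov z\mapsto\chi$, $\ov w\mapsto\eta$ and substitute $\eta=w-2i\langle z,\chi\rangle$ to obtain a polynomial identity in the independent variables $(z,w,\chi)$, then expand everything in the weighting $\mathrm{wt}(z_j)=\mathrm{wt}(\chi_j)=1$, $\mathrm{wt}(w)=2$. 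The crucial structural observation is that $\Phi_1=O_{wt}(3)$ and enters the identity only through $|\Phi_1|^2$; hence the cubic part $\Phi^{(3,0)}_1$ shows up in a single place, namely the coefficient of $w^0$ of bidegree $(3,3)$ in $(z,\chi)$, where $|\Phi_1|^2$ contributes exactly $\langle\Phi^{(3,0)}_1(z),\ov{\Phi^{(3,0)}_1}(\chi)\rangle$ — every other piece of $|\Phi_1|^2$ at weight $6$ either carries a factor $w$, or is thrown into bidegree $(4,2)$ or $(2,4)$ by the substitution $\eta=w-2i\langle z,\chi\rangle$.

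Comparing the $w^0$, bidegree-$(3,3)$ coefficients on the two sides then gives
\[
\langle\Phi^{(3,0)}_1(z),\ov{\Phi^{(3,0)}_1}(\chi)\rangle=-\Big[\langle f,\ov f\rangle+\langle\Phi_0,\ov{\Phi_0}\rangle\Big]^{(3,3),\,w^0},
\]
a right-hand side determined entirely by the normalized data of weight $\le 4$. To evaluate it I would first record the lower-order consequences of the same identity (weights $3,4,5$), i.e. the relations already built into (\ref{eqn:hao}): they fix the weight-$3$ and weight-$4$ coefficients of $f$ (the numbers $\tfrac{i\mu_l}{2}$ and the $b^{(1)}_{lj}$) and the weight-$\le4$ coefficients of $\Phi_0$ in terms of the $\mu_j$ and the vectors $e_j,\hat e_j$. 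Substituting these, the couplings between the quadratic part $\Phi^{(2,0)}_0(z)$ and the linear-in-$w$ data $\Phi^{(1,1)}_0(z)=\sum_j e_jz_j$ produce the contractions $\ov{e_l}\cdot\Phi^{(2,0)}_0(z)=\xi_l(z)$, which combine with the degree-one factors coming from $f$ and with $\mu_{jl}=\sqrt{\mu_j+\mu_l}$ to rebuild the right-hand side as the sum of squares
\[
\sum_{1\le j<l\le\kappa_0}\Big|\tfrac{2}{\sqrt{\mu_j+\mu_l}}\big(\sqrt{\tfrac{\mu_j}{\mu_l}}\,z_j\,\xi_l(z)-\sqrt{\tfrac{\mu_l}{\mu_j}}\,z_l\,\xi_j(z)\big)\Big|^2 .
\]
Carrying out this reduction — verifying that all contributions cancel except this antisymmetric combination and that the constants emerge exactly as written — is the main obstacle, and is precisely the content of \cite[Corollary 3.4]{HJY14}; the hypothesis $\kappa_0\ge 2$ enters here so that the pairs $j<l\le\kappa_0$ exist.

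Finally, having identified $\langle\Phi^{(3,0)}_1(z),\ov{\Phi^{(3,0)}_1}(\chi)\rangle$ with the polarization of the displayed sum of squares, I would invoke the standard rigidity for holomorphic sums of squares: two vector-valued holomorphic polynomials with equal squared norms differ by a constant unitary matrix, after padding the shorter one with zeros. The lower bound $N\ge(\kappa_0+1)n-\kappa_0$ guarantees that the number of $\Phi_1$-slots satisfies $|{\cal S}_1|=N-n-\tfrac{(2n-\kappa_0-1)\kappa_0}{2}\ge\binom{\kappa_0}{2}$, so there is a unitary $U$ acting only on the $\Phi_1$-components with $\Phi^{(3,0)}_1=U\big((V_{jl})_{j<l},\,0'\big)$, where $V_{jl}$ denotes the bracketed expression above. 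Applying $U^{-1}$ to the $\Phi_1$-slots is legitimate: because $\Phi_1$ enters the proper-mapping equation only through the unitarily invariant quantity $|\Phi_1|^2$, it preserves both properness and the normalization (\ref{eqn:hao}) (a constant unitary keeps $\Phi_1=\sum_j z_j\phi^*_{lkj}=O_{wt}(3)$), while bringing $\Phi^{(3,0)}_1$ into the asserted form (\ref{Phi(3,0)1}). I expect the weight-$6$, bidegree-$(3,3)$ bookkeeping to be the only genuinely laborious step; the rest is quoted normalization together with the elementary square-root rigidity.
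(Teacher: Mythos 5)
Your proposal is correct and is essentially the paper's own approach: the paper proves Lemma~\ref{phi30} purely by quoting \cite[Corollary 3.4]{HJY14}, and your argument defers the one genuinely hard step --- that the $w^0$, bidegree-$(3,3)$ coefficient of the complexified basic equation evaluates to the polarization of the sum of squares appearing in (\ref{Phi(3,0)1}) --- to exactly that same reference. The scaffolding you supply around it (the localization of $\Phi^{(3,0)}_1$ at that coefficient, the unitary rigidity for holomorphic tuples with equal squared norms, the preservation of the normalization (\ref{eqn:hao}) under a constant unitary acting only on the $\Phi_1$-slots, and the count $|{\cal S}_1|=N-n-\frac{(2n-\kappa_0-1)\kappa_0}{2}\ge\binom{\kappa_0}{2}$ forced by $N\ge(\kappa_0+1)n-\kappa_0$) is a sound reconstruction of what that corollary encapsulates, so nothing is lost relative to the paper's treatment.
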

We call a map satisfying Theorem \ref{normalize **} and
(\ref{Phi(3,0)1}) a {\it normalized map}, and denote it by
$F^{***}$.

\medspace

From the above lemma and \cite[(3.7)]{HJY14}, we obtain

\begin{equation}
\label{phi (3,0)} \phi_{jk}^{(3,0)}(z)=\begin{cases}
\frac{2i}{\mu_{jk}}[z_j \xi_k(z) + z_k \xi_j(z)],\ \ if\ j<k\le \kappa_0,\\
\frac{2i}{\mu_{jk}} z_l \xi_j(z),\ \ if\ j=k,\ or\ j\le \kappa_0 <k,
\end{cases}
\end{equation}
where $\xi=(\xi_1, ..., \xi_{\kappa_0})=\ov{(e_1, ...,
e_{\kappa_0})} \cdot \Phi_0^{(2,0)}$.
\medskip

Next, we show more properties for $F\in Rat({\HH}_n,{\HH}_{4n-6})$
with $n\geq 7$ and geometric rank $\kappa_0=2$.

From (\ref{Phi(3,0)1}), we have
\begin{equation}
\label{phi 1}
\begin{cases}
\phi_{33}^{(3I_1)} = \frac{2\mu_1}{\sqrt{\mu_2(\mu_1+\mu_2)}}
\ov{e_{2,11}},\ \ \phi_{33}^{(3I_2)} = - \frac{2
\mu_2}{\sqrt{\mu_1(\mu_1+\mu_2)}} \ov{e_{1,22}}, \ \
\phi_{33}^{(3I_k)} = 0,\\
\phi_{33}^{(2I_1+I_2)} = \frac{2\sqrt{\mu_1}}{\sqrt{\mu_2}}
\ov{e_{2,12}} - \frac{2\sqrt{\mu_2}}{\sqrt{\mu_1+\mu_2}}
\ov{e_{1,11}}, \ \
\phi_{33}^{(2I_1+I_j)}=\frac{2 \mu_1}{\sqrt{\mu_2(\mu_1+\mu_2)}} \ov{e_{2,1j}},\\
\phi_{33}^{(I_1+I_2+I_j)} = \frac{2\sqrt{\mu_1}}{\sqrt{\mu_1+\mu_2}}
\ov{e_{2,2j}} - \frac{2\sqrt{\mu_2}}{\sqrt{\mu_1+\mu_2}} \ov{e_{1,1j}},\\
\phi_{33}^{(I_1+2I_2)} = \frac{2\sqrt{\mu_1}}{\sqrt{\mu_1+\mu_2}}
\ov{e_{2,22}} - \frac{2\sqrt{\mu_2}}{\sqrt{\mu_1}} \ov{e_{1,12}}, \
\
\phi_{33}^{(2I_2+I_j)} = - \frac{2 \mu_2}{\sqrt{\mu_1(\mu_1+\mu_2)}} \ov{e_{1,2j}},\\
\phi_{33}^{(I_1+2I_k)} = \phi_{33}^{(I_2+2I_k)} =
\phi_{33}^{(I_1+I_k + I_j)} = \phi_{33}^{(I_2+I_k+I_j)} =
\phi_{33}^{(I_i+I_k+I_l)} = 0.
\end{cases}
\end{equation}

From \cite[(4.20) and (4.46)]{HJY14}, we know
\begin{equation}
\label{phi 2}
\begin{split}
& \phi_{11}^{(2,1)}(z) - 2i \sum^2_{j=1} \frac{\xi_j}{\mu_j}
e_{j,11}
= \frac{-2i}{\sqrt{\mu_1}}z_1 f_1^{(1,2)}(z),\\
&\phi_{12}^{(2,1)}(z) - 2i \sum^2_{j=1} \frac{\xi_j}{\mu_j} e_{j,12}
= \frac{-2i}{\sqrt{\mu_1+\mu_2}}(z_1 f_2^{(1,2)}(z)+z_2 f_1^{(1,2)}(z)),\\
& \phi_{22}^{(2,1)}(z) - 2i \sum^2_{j=1} \frac{\xi_j}{\mu_j}
e_{j,22}
= \frac{-2i}{\sqrt{\mu_2}}z_2 f_2^{(1,2)}(z),\\
& \phi_{1\alpha}^{(2,1)}(z) - 2i \sum^2_{j=1} \frac{\xi_j}{\mu_j}
e_{j,1\alpha}
= \frac{-2i}{\sqrt{\mu_1}}z_\alpha f_1^{(1,2)}(z),\\
& \phi_{2\alpha}^{(2,1)}(z) - 2i \sum^2_{j=1} \frac{\xi_j}{\mu_j}
e_{j,2\alpha}
= \frac{-2i}{\sqrt{\mu_2}}z_\alpha f_2^{(1,2)}(z),\\
& \phi_{33}^{(2,1)}(z) - 2i \sum^2_{j=1} \frac{\xi_j}{\mu_j}
e_{j,33} =
\frac{-2}{\sqrt{\mu_1+\mu_2}}\bigg(\sqrt{\frac{\mu_1}{\mu_2}} z_1
f_2^{(1,2)}(z)
-\sqrt{\frac{\mu_2}{\mu_1}} z_2 f_1^{(1,2)}(z)\bigg).\\
\end{split}
\end{equation}

\medspace

By \cite[(4.3)]{HJY14}, we have $\frac{i}{2} \mu_1
a_n^{(1)}(\epsilon)+f_1^{(1,2)}(\epsilon,0,...,0)=0,
\frac{i}{2}\mu_2 b_n^{(1)}(\epsilon)+f_2^{(1,2)}(\epsilon,0,...,0)$
$=0$, and $\phi^{(1,2)}(\epsilon,0,...,0)+e_1^* a_n^{(1)}(\epsilon)
+ e_2^* b_n^{(1)}(\epsilon)=0$. Thus
\begin{equation}
\label{phi 3} \phi^{(1,2)}(\chi, 0)=-\frac{2i}{\mu_1}
f_1^{(1,2)}(\chi, 0) e_1^* - \frac{2i}{\mu_2} f_2^{(1,2)}(\chi, 0)
e_2^*.
\end{equation}

\medspace

Next we recall the definitions and the properties of the degeneracy
ranks. These invariants are introduced by Lamel \cite{La01} and
Ebenfelt-Huang-Zaitsev \cite{EHZ04}.

Let $F\in Rat(\HH_n,\HH_N)$ and $p\in \p\HH_n$. Write $L^j=\frac{\p
}{\p z_j} + 2i \ov{z_j} \frac{\p}{\p w}$. Then $\{L^j\}_{1\le j\le
n-1}$ forms a basis of tangent vector fields of $(1,0)$ on
$\p\HH_n$. Denote by $\hat \rho(Z, \ov Z)$  the defining function of
the real hypersurface $\p\HH_N$, and denote by $\hat \rho_{\ov
Z}:=\ov\p \hat \rho$ the complex gradient of $\rho$. Now we define
an increasing sequence of linear subspaces $E_{k}(p) \subset
\CC^{N}$ as follows:
\begin{equation}
E_{k}(p)=span_{\CC}\{L^{\alpha_1} ... L^{\alpha_{n-1}} \hat\rho_{\ov
Z} \circ F(p)\ |\ \ |\alpha_1|+...+|\alpha_{n-1}|\le k\}
\end{equation}
Define $d_1(p):=0$ and $ \label{dk(p)} d_k(p):= \dim_\CC \
E_k(p)/E_1(p). $  By moving $p$ to a nearby point $p_0$ if
necessary, we may assume that all $d_l(p)$ are locally constant near
$p_0$ and
\begin{equation}
\label{rank} d_2(p)<d_3(p)<...<d_{l_0}(p)=d_{l_0+1}(p)=....
\end{equation}
for some $l_0$ with $1\le l_0\le N-n+1$.  By [EHZ04], $l_0$  is
called the {\it degeneracy rank} of $F$, and  $d_{l_0}$ is called
the {\it degeneracy dimension} of $F$. These definitions depend on
the open subset $U$.  {\it degeneracy rank} $l_0$ of $F$ is defined
to be the
 minimization of $l_0$ among all these open sets. Then we have the
 following degree estimates when both $\kappa_0$ and $l_0$ are
 small.

\begin{thm} \label{nondegenerate rank}
    (\cite[Theorem 1.4]{CJY18})\ \ \
    Let $F\in Rat(\BB^n, \BB^N)$ with $5\le n\le N$. Suppose that  $\kappa_0\le 2$ and $l_0\le 2$. Then $deg(F)\le 2$.
\end{thm}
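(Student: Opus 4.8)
The plan is to pass to the Siegel model, reduce to the decisive case $\kappa_0 = 2$, and show that the hypothesis $l_0 \le 2$ annihilates the entire cubic part of the normalized map; a bootstrap through the basic boundary identity then forces $F$ to be rational of degree at most $2$. First I would dispose of the cases $\kappa_0 \le 1$: when $\kappa_0 = 0$ the matrix $\mathcal A(p)$ vanishes identically and $F$ is equivalent to a linear embedding, while $\kappa_0 = 1$ runs the same argument with a single active variable and is strictly easier. So assume $\kappa_0 = 2$. After composing with suitable automorphisms and centering at a point where the dimensions $d_k$ are locally constant, I would put $F = (f,\phi,g)$ into the normal form of Theorem \ref{normalize **} and record the cubic data: in this gauge $g = w$, the pure $z$-cubic part $f^{(3,0)}$ of $f$ vanishes (no weight-two monomial occurs in the factors $f^*_{lj}$), and the cubic part of $\phi$ is the explicit linear expression in $\xi = \ov{(e_1,e_2)}\cdot\Phi_0^{(2,0)}$ furnished by Lemma \ref{phi30} and (\ref{phi (3,0)}).

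Next I would turn the degeneracy hypothesis into vanishing of coefficients. Writing $\hat\rho_{\ov Z}\circ F = (-\widetilde f,\tfrac{i}{2})$ with $\widetilde f = (f,\phi)$, one computes the iterated fields $L^{j_1}\cdots L^{j_m}\hat\rho_{\ov Z}\circ F$ at the base point; since $\overline{z_j}$ vanishes there, only the pure $z$-derivatives survive, and modulo $E_1$ the successive quotients are spanned by the homogeneous coefficients $\widetilde f^{(m,0)}$. Thus $E_2/E_1$ is governed by the quadratic coefficients while $E_3/E_2$ is governed by $\widetilde f^{(3,0)} = (0,\phi^{(3,0)})$, the $f$-part being absent by the normalization. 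The covariance of the construction under the isotropy automorphisms $\sigma^0_p$ then lets me promote these single-point relations to the weight-three jets at every nearby point. Now $l_0 \le 2$ is exactly the statement $E_3 = E_2$, i.e. $\phi^{(3,0)}$ lies in $E_2$; since the $\phi$-slots of the generators of $E_2$ carry only the linear and quadratic monomials of $\phi$, which are independent of the weight-three monomials appearing in $\phi^{(3,0)}$, the containment forces $\phi^{(3,0)} \equiv 0$. Reading this off from (\ref{phi (3,0)}), in particular from the components $\phi_{jk}^{(3,0)} = \tfrac{2i}{\mu_{jk}} z_k\,\xi_j$ with $j \le \kappa_0 < k$ (which exist because $n \ge 5$), we get $\xi \equiv 0$.

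With $\xi \equiv 0$ I would substitute back into (\ref{phi 2}) and (\ref{phi 3}). The left sides of (\ref{phi 2}) collapse to the $\phi^{(2,1)}$ terms while the right sides are the multiples $z_\alpha f^{(1,2)}_\beta$; combined with (\ref{phi 3}), which expresses $\phi^{(1,2)}(\chi,0)$ through $f_1^{(1,2)}$ and $f_2^{(1,2)}$, this yields a homogeneous linear system whose only solution is $f^{(1,2)} \equiv 0$, whence also $\phi^{(2,1)} \equiv 0$. At this point all coefficients of $F$ of weighted degree three have been shown to vanish. To conclude I would run the standard bootstrap on the boundary identity $\mathrm{Im}\,g = |f|^2 + |\phi|^2$ over $\partial\HH_n$: inserting $g = w$ and the normalized expansions and comparing homogeneous weighted components, the vanishing of the weight-three data closes the recursion and forces every coefficient of weighted degree $\ge 3$ to vanish. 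Hence $F$ is equivalent to a polynomial map of weighted degree two, and therefore $\deg F \le 2$.

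The main obstacle is to upgrade $E_3 = E_2$ to the actual vanishing $\phi^{(3,0)} \equiv 0$, rather than mere membership in $E_2$: this requires the covariance argument together with a careful account of exactly which monomials occupy the $\phi$-slots of the generators of $E_2$ at all nearby points, and it is here that $\kappa_0 = 2$ (only two active variables) and the codimension bound $N \ge n$ enter to guarantee the needed linear independence. The final induction is also delicate, because the Hermitian identity couples distinct weighted orders; one must arrange the expansion so that the weight-three vanishing genuinely terminates the recursion instead of merely constraining it, and this is where the full force of the normal form in Theorem \ref{normalize **} is used.
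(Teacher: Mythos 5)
Your reduction of the degeneracy spaces is correct up to a point: in the normalization of Theorem \ref{normalize **} one has $f^{(2,0)}=f^{(3,0)}=0$, so $E_2(0)/E_1(0)$ is spanned by the coefficient vectors of $\phi^{(2,0)}$ and $E_3(0)/E_2(0)$ by those of $\phi^{(3,0)}$. But your decisive step --- ``the containment $E_3=E_2$ forces $\phi^{(3,0)}\equiv 0$'' --- is false, and the error is a conflation of linear independence of \emph{monomials} (as polynomials) with linear independence of \emph{coefficient vectors} in $\CC^N$: the spaces $E_k(0)$ are spanned by constant vectors, not polynomials. Since $\phi^{(2,0)}_{lk}=\mu_{lk}z_lz_k$ with every $\mu_{lk}\neq 0$, the quadratic coefficient vectors span the \emph{entire} coordinate subspace of the $\Phi_0$-slots; hence $E_3(0)=E_2(0)$ says only that the cubic coefficient vectors have vanishing $\Phi_1$-components, i.e.\ $(\Phi_1)^{(3,0)}=0$, and puts no constraint whatsoever on $(\Phi_0)^{(3,0)}$. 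Moreover $(\Phi_1)^{(3,0)}=0$ does not give $\xi\equiv 0$: by Lemma \ref{phi30} (with $\kappa_0=2$) it means $\mu_1z_1\xi_2-\mu_2z_2\xi_1=0$, which admits the nonzero solutions $\xi_1=\mu_1z_1h$, $\xi_2=\mu_2z_2h$ for an arbitrary linear form $h$, and then $(\Phi_0)^{(3,0)}\neq 0$ by (\ref{phi (3,0)}), e.g.\ $\phi_{11}^{(3,0)}=\frac{2i}{\sqrt{\mu_1}}\mu_1z_1^2h$. Note that this paper itself only uses the correct, \emph{converse} implication: $(\Phi^{***}_{q})_1^{(3,0)}\equiv 0$ near a point implies $l_0\le 2$ (Lemma \ref{lemma N=3n-2} and Case I in the proof of Theorem \ref{mainthm}); the implication your argument needs runs the other way and fails.

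Everything downstream inherits this gap and has independent problems. Even granting $\xi\equiv 0$, equations (\ref{phi 2}) and (\ref{phi 3}) merely \emph{express} $\phi^{(2,1)}$ and $\phi^{(1,2)}$ in terms of $f^{(1,2)}$; they do not form a homogeneous system forcing $f^{(1,2)}\equiv 0$, since any choice of $f^{(1,2)}$ is compatible with them. Your final bootstrap aims at the conclusion that \emph{every} Taylor coefficient of weighted degree $\ge 3$ of the normalized map vanishes, i.e.\ that $F$ is a weighted-degree-two polynomial; this is strictly stronger than $\deg F\le 2$ and cannot be obtained by ``comparing homogeneous weighted components'' of the Chern--Moser identity, because that identity couples different weights --- controlling this coupling is precisely the hard analytic content (compare Sections 4--7 of this paper, where even the case $(\Phi_1)^{(3,0)}\not\equiv 0$ with the conclusion $\deg\le 3$ costs four sections of matrix analysis of the Segre-restricted identity). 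Finally, be aware that the statement under review is imported here from \cite[Theorem 1.4]{CJY18}: this paper contains no proof of it, and the actual proof is essentially the entire content of that reference, so there is no short internal argument to reconstruct.
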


Finally, we finish this section by recalling a lemma of \cite{HJ01},
which gives a reduction for the degree estimates of a rational
holomorphic map.

 For any point $q=(\w q, q_n)\in \CC^n$, the Segre family of
$\p\HH_n$ is a family given by $Q_q:=\{(\zeta, \tau)\in
\CC^{n-1}\times \CC\ |\ \tau=\ov{q_n}+ 2i \langle \zeta, \w
q\rangle) \}$. Let $F:\CC^n\to \CC^N$ be a rational holomorphic map.
Then the restriction $F|_{Q_q}(\zeta)= F(\zeta, \ov{q_n}+2i \langle
\zeta, \w q\rangle)$ is a rational holomorphic map in $\zeta\in
\CC^{n-1}$.

\begin{lem}
\label{degree} (\cite[lemmas 5.3-5.4]{HJ01}) Let $F=\frac{(P_1, ...,
P_N)}{Q}$ be a rational holomorphic map from $\CC^n$ into $\CC^N$
where $P_j$ and $Q$ are as above. Suppose that there exists a fixed
positive integer $k$ such that one of the following two conditions
is satisfied:
\begin{enumerate}
\item $deg(F|_{Q_q})\le k$ for all $q\in \p\HH_n$.
\item $deg(F^{**}_q|_{Q_0})\le k$ for all $q\in \p\HH_n$.
\end{enumerate}
Then $deg(F)\le k$.
\end{lem}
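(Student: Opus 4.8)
The plan is to prove Lemma \ref{degree}, which asserts that controlling the degree of the Segre-family restrictions $F|_{Q_q}$ (or those of the normalized maps $F^{**}_q|_{Q_0}$) uniformly by $k$ forces $\deg(F)\le k$. Recall that for a point $q=(\w q, q_n)$, the Segre variety is the affine complex hyperplane $\tau=\ov{q_n}+2i\langle\zeta,\w q\rangle$, so restricting $F=\frac{(P_1,\dots,P_N)}{Q}$ to $Q_q$ substitutes $w=\ov{q_n}+2i\langle z,\w q\rangle$ into the polynomials. The key observation is that the degree of $F$ as a rational map on $\CC^n$ is encoded in the top-order homogeneous parts of the $P_j$ and $Q$, and that this top-order information is detected by the family of restrictions as $q$ varies over the boundary.

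The main steps I would carry out are as follows. First I would fix the reduced representation $F=\frac{(P_1,\dots,P_N)}{Q}$ with $(P_1,\dots,P_N,Q)=1$, and write $d=\deg(F)$, so that at least one of the $P_j$ or $Q$ has a nonzero homogeneous part of degree $d$. Second, I would analyze how the substitution $w\mapsto \ov{q_n}+2i\langle z,\w q\rangle$ acts: since this substitution is affine-linear in $(z,\ov{q_n},\w q)$ and linear of degree one in $z$, the restriction $P_j|_{Q_q}(\zeta)$ is a polynomial in $\zeta$ whose degree is at most $d$, with equality governed by the highest-weight terms. The crucial point is that $\deg(F|_{Q_q})$ can only drop below $d$ if the top-degree homogeneous parts of all numerators and the denominator collapse under the substitution, \emph{or} if a common factor appears in the restricted numerators and denominator causing cancellation after reduction. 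I would show that generic cancellation of this type cannot persist for all $q\in\p\HH_n$ simultaneously unless the top-degree parts were already trivial as functions on $\CC^n$, contradicting the definition of $d$. Concretely, one parametrizes the highest-degree contribution and uses that the boundary $\p\HH_n$ is a totally real-parametrized family large enough (a real hypersurface of real dimension $2n-1$) that a polynomial identity holding for all such $q$ propagates to a polynomial identity on all of $\CC^n\times\CC^n$.

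For the second condition, involving $F^{**}_q|_{Q_0}$, I would invoke that $F^{**}_q=\tau^{**}_q\circ\tau^F_q\circ F\circ\sigma^0_q$ is obtained from $F$ by composing with automorphisms of $\HH_n$ and $\HH_N$; since $\sigma^0_q$ and the automorphisms $\tau^F_q,\tau^{**}_q$ are (at worst) of bounded degree in the appropriate sense and preserve the rational structure, $\deg(F^{**}_q)$ and $\deg(F)$ differ in a controlled way, and the restriction to $Q_0$ (the Segre variety through the base point after recentering) plays the role of $Q_q$ in the original coordinates. The two conditions are thus linked by the change of variables $\sigma^0_q$, which maps $Q_0$ to $Q_q$, so that $\deg(F^{**}_q|_{Q_0})$ and $\deg(F|_{Q_q})$ agree up to the automorphism bookkeeping; reducing (2) to (1) is the efficient route.

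The main obstacle I anticipate is ruling out spurious degree drop caused by \emph{common factors} that appear only after restriction. The restricted numerator and denominator might share a common factor in $\zeta$ that is not visible as a common factor of $P_j,Q$ in $\CC^n$, and this could artificially lower $\deg(F|_{Q_q})$ at special $q$. The heart of the argument is therefore to show that such cancellation cannot happen on a set of $q$ large enough to matter: one exploits that $\p\HH_n$ is an irreducible real-analytic hypersurface and that a rational identity valid on the associated complexified Segre family forces an identity on the ambient space, so that if $\deg(F|_{Q_q})\le k$ for \emph{all} $q\in\p\HH_n$, the top-degree data of $F$ beyond degree $k$ must vanish identically. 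This propagation-of-identity step, which upgrades a family of pointwise degree bounds to a global polynomial vanishing, is where the real work lies; the remaining degree comparisons are then formal.
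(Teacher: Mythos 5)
First, a point of comparison: the paper gives no proof of this lemma at all; it is quoted directly from Huang--Ji \cite{HJ01} (their Lemmas 5.3--5.4), so your attempt has to be measured against that argument. Your overall plan does follow the same route: restrict $F$ to the Segre family, observe that the degree of a restriction can drop only through collapse of the top-degree terms or through a common factor appearing after the substitution $w\mapsto \ov{q_n}+2i\langle \zeta,\w q\rangle$, rule both out for most $q$, and use that the parameter set $\{(\ov{\w q},\ov{q_n}):q\in\p\HH_n\}$ is a real hypersurface of $\CC^n$, hence is not contained in any proper complex algebraic subvariety. Your reduction of condition (2) to condition (1) is also correct: $\sigma^0_q$ maps $Q_0=\{w=0\}$ affinely onto $Q_q$ precisely because $q\in\p\HH_n$, and pre- and post-composition with linear fractional automorphisms preserves degree, a fact the paper itself invokes in the proof of Theorem \ref{mainthm}.

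The genuine gap is at the step you yourself flag as ``where the real work lies'': you never prove that common-factor cancellation after restriction is confined to a thin set of parameters, you only assert it. This is exactly the technical core of the cited lemmas in \cite{HJ01}, and without it your ``propagation of identity'' step cannot even start, because $deg(F|_{Q_q})\le k$ is a statement about the \emph{reduced} fraction and is not a polynomial condition on $q$ until cancellation has been excluded. The missing argument can be supplied as follows. Since $(P_1,\dots,P_N,Q)=1$, the common zero set $V=\{P_1=\cdots=P_N=Q=0\}$ has codimension at least $2$ in $\CC^n$ (a codimension-one component would be an irreducible hypersurface whose defining polynomial divides every $P_j$ and $Q$). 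Form the incidence variety $I=\{(x,q):x\in V\cap Q_q\}\subset V\times \CC^n_q$; the Segre hyperplanes through a fixed point $x$ form an $(n-1)$-dimensional affine family, so $\dim I\le (n-2)+(n-1)=2n-3$, and therefore the fiber $V\cap Q_q$ of the projection $I\to\CC^n_q$ has dimension at most $n-3$ for $q$ outside a proper algebraic subvariety. But if the restricted polynomials shared a nonconstant factor $h(\zeta)$, then $V\cap Q_q$ would contain $\{h=0\}$, of dimension $n-2$ in $Q_q\cong\CC^{n-1}$ --- a contradiction. Combining this with the (easy) genericity of non-collapse of top-degree parts, the ``bad'' parameter set lies in a proper complex subvariety, which cannot contain the real hypersurface of boundary parameters; so there exists a single $q\in\p\HH_n$ with $deg(F|_{Q_q})=deg(F)\le k$. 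Note this also repairs a slight misdirection in your framing: one does not need an identity to propagate from $\p\HH_n$ to all of $\CC^n$, only the existence of one good boundary parameter, which is what Zariski-density actually buys you.
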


\section{Proof of Theorem \ref{mainthm}}

This section is devoted to the proof of Theorem \ref{mainthm},
assuming Proposition \ref{propdeg}, whose proof will be given in the
end of this section and in $\S 4-7$.

We start with  some notations. For any $F\in Rat(\HH_n, \HH_{4n-6})$
and $q\in \p \HH_n$, the map associates to a normalized map
$F_q^{***}$. We write $F_q^{***}=(f^{***}_q, \phi^{***}_q,
g^{***}_q)$ and $
\phi_{q}^{***}=\big((\Phi_{q}^{***})_0,(\Phi_{q}^{***})_1\big) $,
which are decomposed similar to those of $F$ in Theorem
\ref{normalize
**}.

\begin{lem}
\label{lemma N=3n-2} Let $F\in Rat(\HH_n, \HH_{4n-6})$ be a
normalized map with $\kappa_0=2$ and $n\ge 7$. Then
\begin{equation}
\label{lemma 2.5 1} D^\alpha F(0)\in span_{|\beta|\le 3} \{D^\beta
F(0)\}
\end{equation}
for any index $\alpha$,  where $D^\alpha=\frac{\p^{|\alpha|}}{\p
z_1^{\alpha_1}\cdots \p z_n^{\alpha_n}}$ is the standard
differential operator. If, in addition,
$\big(\Phi^{***}_{q}\big)_1^{(3,0)}(z)\equiv 0$ for any
 $q$  in
an open neighborhood of $0$ in $\p\HH_n$, then for any index
$\alpha$,
\begin{equation}
\label{lemma 2.5 2} D^\alpha F(0)\in span_{|\beta|\le 2} \{D^\beta
F(0)\}.
\end{equation}
\end{lem}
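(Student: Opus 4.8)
The plan is to reduce, in two steps, to a finite-determination property of the single block $\Phi_1$, and then to extract that property from the basic proper-map identity together with the explicit low-weight formulas recorded above.

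First I would use the triviality of most components of the normalized map. Since $g=w$ and $f_j=z_j$ for $j\ge 3$, the first-order jet is transparent: $\p_{z_j}F(0)=(e_j,0,0)$ for $1\le j\le n-1$ and $\p_wF(0)=(0,0,1)$, so $V_1:=\mathrm{span}_{|\beta|\le 1}\{D^\beta F(0)\}$ is exactly the coordinate subspace of all vectors whose $\phi$-slots vanish, of dimension $n$. Hence $D^\alpha F(0)\in\mathrm{span}_{|\beta|\le k}\{D^\beta F(0)\}$ if and only if $D^\alpha\phi(0)\in\mathrm{span}_{|\beta|\le k}\{D^\beta\phi(0)\}$, because any discrepancy in the $f$- and $g$-slots lies in $V_1$. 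So it suffices to prove the two span identities for $\phi$ alone. Next, each $\phi_{lk}$ with $(l,k)\in{\cal S}_0$ has leading term $\mu_{lk}z_lz_k$ with $\mu_{lk}\ne 0$, these quadratics are pairwise distinct monomials, and $\Phi_1=O_{wt}(3)$ contributes nothing of weight $2$; therefore $D^{I_l+I_k}\phi(0)=\mu_{lk}e_{lk}$ already sweeps out every ${\cal S}_0$-direction at order $2$. Viewing $\mathrm{span}\{D^\beta\phi(0)\}$ as the linear span of the image of $\phi$, this exhibits it as the full ${\cal S}_0$-space together with $\mathrm{span}\{D^\beta\Phi_1(0)\}$, and reduces the lemma to the corresponding statements for $\Phi_1$.

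It remains to show $\mathrm{span}_{|\beta|\le 3}\{D^\beta\Phi_1(0)\}=\mathrm{span}\{D^\beta\Phi_1(0)\}$ in general, and $\mathrm{span}_{|\beta|\le 2}\{D^\beta\Phi_1(0)\}=\mathrm{span}\{D^\beta\Phi_1(0)\}$ under the extra hypothesis. Dually, I would show that any combination $\psi=\sum_{(l,k)\in{\cal S}_1}\lambda_{lk}\phi_{lk}$ which vanishes to order $\ge 4$ (resp. $\ge 3$) must vanish identically. Its order-$2$ part is governed by $\Phi_1^{(1,1)}=\sum_{j=1}^2\hat e_jz_j$ and its order-$3$ part by $\Phi_1^{(3,0)},\Phi_1^{(2,1)},\Phi_1^{(1,2)}$, which are pinned down by (\ref{Phi(3,0)1}), (\ref{phi 1}), (\ref{phi 2}) and (\ref{phi 3}); these are all consequences of the Hermitian identity $|z|^2=|f(z,w)|^2+|\phi(z,w)|^2$ on $\p\HH_n$. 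The heart of the matter is to promote this to \emph{all} orders: complexifying that identity along the Segre family to $\langle\w f(z,w),\ov{\w f(\chi,\eta)}\rangle=\langle z,\chi\rangle$ with $w=\ov\eta+2i\langle z,\chi\rangle$, I would express every Taylor coefficient of $\Phi_1$ of order $\ge 4$ as a linear combination of the low-order data $e_{j,kl}$, $\hat e_j$, $\xi_j$, $f^{(1,2)}$ that already occur at order $\le 3$. Here the rank-$2$ normal form and the value $N=4n-6$ (so that ${\cal S}_1$ carries exactly $n-3$ slots, with $n\ge 7$) are precisely what make the resulting system closed and solvable. I expect this prolongation to be the main obstacle, and in particular the bookkeeping that disentangles the spurious $w$-dependence forced by the substitution $w=\ov\eta+2i\langle z,\chi\rangle$ from the genuine higher jets of $\Phi_1$; this is the step in which the explicit relations from \cite{HJY14} are indispensable.

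Finally, for the conditional statement (\ref{lemma 2.5 2}) I would use that $(\Phi^{***}_q)_1^{(3,0)}\equiv 0$ is assumed for \emph{all} $q$ in a neighborhood of $0$, not only at $0$. Differentiating this identity in the base point $q$—and tracking how the normalizing maps $\sigma^0_p$, $\tau^F_p$ and Huang's $\tau^{**}_p$ depend on $p$—turns the vanishing of the order-$3$ data at nearby points into linear relations among the order-$\ge 3$ jets of $\Phi_1$ at $0$. This should collapse the order-$3$ contribution altogether, so that $\Phi_1$ is already determined at order $2$ by $\Phi_1^{(1,1)}$, giving $\mathrm{span}_{|\beta|\le 2}\{D^\beta\Phi_1(0)\}=\mathrm{span}\{D^\beta\Phi_1(0)\}$ and hence (\ref{lemma 2.5 2}).
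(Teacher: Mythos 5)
Your reductions are fine: using the normal form (\ref{eqn:hao}) to strip off the $f$-, $g$- and ${\cal S}_0$-slots, and dualizing the span statement to ``any combination $\sum_{(l,k)\in{\cal S}_1}\lambda_{lk}\phi_{lk}$ vanishing to order $4$ (resp.\ $3$) at $0$ must vanish identically'' are both correct, and they match the shape of what has to be proved. But the heart of the lemma is exactly the step you defer (``I expect this prolongation to be the main obstacle''), and the mechanism you propose for it cannot work as stated. Knowing at the single point $0$ that the order-$4$ jet of $\Phi_1$ lies in the span of the order-$3$ jet does not self-improve to orders $5,6,\dots$: span stabilization from one order to the next at one point is not an inductive statement (for a general map, $E_4=E_3$ at a point says nothing about $E_5$). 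What makes the induction run --- and what the paper actually uses --- is that the order-$4$-to-order-$3$ reduction holds at \emph{every} boundary point near $0$: by \cite[Theorem 4.1 and (5.3)]{HJY14} one has $span_{|\beta|\le 4}\{{\cal L}^\beta F|_0\}\subseteq span_{|\alpha|\le 3}\{{\cal L}^\alpha F|_0\}$; by \cite[claim (5.5)]{HJY14} the same holds for each associated map $F_p$, hence (via ${\cal L}^\alpha(F_p)|_0={\cal L}^\alpha F(p)$) at every nearby $p$; and then one differentiates these relations in the base point, applying $\ov{\cal L}_j,{\cal L}_k$ repeatedly as in \cite[p.~139]{HJY14}, to push the containment to all orders. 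Your plan stays anchored at the single point $0$ (the complexified identity along $Q_0$) and never introduces this open-set-of-base-points input, so the passage from ``order $4$'' to ``all orders'' is a genuine gap, not bookkeeping. Attempting instead to literally express every coefficient of $\Phi_1$ of order $\ge 4$ in terms of $e_{j,kl}$, $\hat e_j$, $\xi_j$, $f^{(1,2)}$ from the identity alone is not known to close up --- that kind of computation is essentially the content of \cite{HJY14} already at order $4$, and there is no reason the system remains solvable order by order without the nearby-point rank information.

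The irony is that you invoke precisely the right device --- varying the base point $q$ and differentiating in $q$ --- but only for the conditional statement (\ref{lemma 2.5 2}), where the hypothesis $(\Phi^{***}_q)_1^{(3,0)}\equiv 0$ on a neighborhood forces your hand. That same device is what is needed for (\ref{lemma 2.5 1}): once the $4$-to-$3$ reduction is known on an open set of boundary points, repeated application of $\ov{\cal L}_j,{\cal L}_k$ to the resulting identities (with point-dependent coefficients) yields (\ref{lemma 2.5 1}), and the conditional refinement (\ref{lemma 2.5 2}) follows by the same argument starting from the stronger $3$-to-$2$ reduction, which is where your differentiation-in-$q$ sketch belongs. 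To repair the proof, keep your reductions as a preamble, then either cite or reprove the two inputs from \cite{HJY14} and carry out the propagation argument explicitly.
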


\medspace

\begin{proof} By \cite[Theorem 4.1 and (5.3)]{HJY14}, we have
$span_{|\beta|\le 4}\{ {\cal L}^\beta F|_0\} \le span_{|\alpha|\le
3}\{ {\cal L}^\alpha F|_0\}$. By \cite[claim (5.5)]{HJY14}, for any
$p\in\p\HH_n$, the associated map $F_p$ satisfies
\[
span_{|\beta|\le 4}\{ {\cal L}^\beta F_p|_0\} \le span_{|\alpha|\le
3}\{ {\cal L}^\alpha F_p|_0\}.
\]
Since ${\cal L}^\alpha (F_p)|_0={\cal L}^\alpha F(p)$, it implies
$span_{|\beta|\le 4}\{ {\cal L}^\beta F|_p\} \le span_{|\alpha|\le
3}\{ {\cal L}^\alpha F|_p\}$. Then  by similar argument as that in
\cite[p. 139]{HJY14}, through  applying $\ov{\cal L}_j, {\cal L}_k$
to $F$, we obtain (\ref{lemma 2.5 1}). If, in addition,
{$(\Phi^{***}_q)_1^{(3,0)}(z)\equiv 0$ for any
 $q$  in
an open neighborhood of $0$ in $\p\HH_n$}, by the last paragraph of
\cite[p. 139]{HJY14}, we get (\ref{lemma 2.5 2}).
\end{proof}


\begin{proof}[Proof of Theorem \ref{mainthm}]

As mentioned in $\S 2$, by the Cayley transform, we can
 identify $Rat(\BB^n,\BB^N)$ with $Rat(\HH_n, \HH_N)$. It suffices for us to show for
  any $F\in Rat(\HH_n, \HH_{4n-6})$ with $n\geq 7$,
  we have deg$(F)\leq 3$.

  By \cite[Theorem 1.1]{Hu03}, we have $N \ge n
  +\frac{(2n-\kappa_0-1)\kappa_0}{2}$. Hence for $N=4n-6$, its
 geometric rank $\kappa_0\le 3$. By \cite{Hu99}, we can suppose $1\leq \kappa_0\le 3$, otherwise the map must be linear fractional.
  If  $F\in Rat(\HH_n,
\HH_{4n-6})$ with $n \ge 3$ and $\kappa_0=1$, by \cite[Corollary
1.3]{HJX06}, we have $deg(F)\le 3$.  If $F\in Rat(\HH_n,
\HH_{4n-6})$ with $n \ge 5$ and $\kappa_0=3$, by  \cite[ Theorem
1.1]{JY18}, we have $deg(F)\le 2$. Therefore, we assume $\kappa_0=2$
for $F$ in the rest proof of Theorem \ref{mainthm}.

Let $F\in Rat(\HH_n, \HH_{4n-6})$ with $n \ge 7$ and $\kappa_0=2$.
For any $q\in \p\HH_n$, we study the associated map $(F_{q})^{***}$.
There are two cases to consider.

\medspace

{\bf Case I:} $F^{***}_{q}$ satisfies
$\big(\Phi_{q}^{***}\big)_1^{(3,0)}(z)\equiv 0$ for any $q$ in a
neighborhood of $0$ in $\p\HH_n$. Recall that $F^{***}_{q}$ is a
normalized map with geometric rank $2$. Hence by (\ref{lemma 2.5
2}), it implies that the degeneracy rank $l_0\le 2$. Then we apply
Theorem \ref{nondegenerate rank} to conclude $deg(F)\le 2$.

\medspace

{\bf Case 2:} $F^{***}_{q_0}$ satisfies that
$\big(\Phi_{q_0}^{***}\big)_1^{(3,0)}(z)\not\equiv 0$  for some
$q_0\in \p\HH_n$. As in Case I,  $F^{***}_{q_0}$ is a normalized map
with geometric rank $2$.  Now we need the following proposition.

\begin{prop}\label{propdeg}
  Let $F\in Rat(\HH_n,\HH_{4n-6})\ (n\geq 7)$ be a normalized map, whose geometric rank
  $\kappa_0=2$. Suppose that $(\Phi_1)^{(3,0)}(z)\not\equiv 0$.Then
  \begin{equation}\label{reduction 0}
 deg(f(z, 0), \phi(z, 0))\le 3.
\end{equation}
\end{prop}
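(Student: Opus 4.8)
The plan is to establish the degree bound $\deg(f(z,0),\phi(z,0))\le 3$ by exploiting the rigidity forced on a normalized map of geometric rank $\kappa_0=2$ once we know $(\Phi_1)^{(3,0)}\not\equiv 0$. The whole strategy rests on the explicit formulas already assembled in Section 2. The nonvanishing hypothesis on $(\Phi_1)^{(3,0)}$, by Lemma \ref{phi30} and formula (\ref{phi 1}), forces at least one of the coefficients $\ov{e_{j,kl}}$ (equivalently some $\xi_j$) to be nonzero. This is the crucial ``nondegeneracy'' that lets me treat the quantities $\xi_1,\xi_2$ and the associated directions $e_1^*,e_2^*$ as honest, nonzero objects rather than phantoms, and it is what ultimately caps the degree at exactly $3$ rather than $2$.

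First I would set up the restriction to the slice $w=0$, writing $\w f(z,0)=(f(z,0),\phi(z,0))$ and reading off its homogeneous components from the normal form (\ref{eqn:hao}). The target is to show every homogeneous piece of weight/degree $\ge 4$ in $z$ vanishes on this slice. The natural engine is Lemma \ref{degree}: it suffices to bound $\deg(F|_{Q_q})$ or $\deg(F^{**}_q|_{Q_0})$ uniformly by $3$, which converts the problem into controlling the Taylor coefficients of $F$ (equivalently the derivatives $D^\alpha F(0)$) up to order $3$. Here Lemma \ref{lemma N=3n-2} is the key structural input: its conclusion (\ref{lemma 2.5 1}) says that all higher derivatives $D^\alpha F(0)$ already lie in the span of those of order $\le 3$. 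So the combinatorial heart is to show that this span relation, together with the explicit third-order data in (\ref{phi 1}), (\ref{phi 2}), (\ref{phi 3}), forces the degree-$(\ge 4)$ part of the polynomial map $\w f(z,0)$ to be zero.

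Concretely, the central computation I would carry out is to extract the degree-$4$ (and then inductively higher) homogeneous terms of the basic identity $\mathrm{Im}\, g = |\w f|^2$ on the Segre variety $Q_0$, i.e. the defining equation of $F$ mapping $\p\HH_n$ into $\p\HH_{4n-6}$, restricted to $w=0$. Plugging in the normalized expansions for $f_l$, $\phi_{lk}$ and the formulas for $\Phi^{(3,0)}$ and the $(2,1)$-components, I expect the degree-$4$ coefficients of $(f,\phi)|_{w=0}$ to be expressible in terms of $f_j^{(1,2)}$, the $\xi_j$, and the $e^*_j$; the relations (\ref{phi 2})--(\ref{phi 3}) then let me eliminate the $\phi^{(2,1)}$ unknowns in favor of the $f_j^{(1,2)}$, and I would aim to show these in turn must vanish on the slice. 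The role of $\kappa_0=2$ is that there are only two active indices $j\in\{1,2\}$, so the system of equations is small enough to solve explicitly, while $n\ge 7$ guarantees enough free indices $\alpha>\kappa_0$ (and enough room in the target dimension $4n-6$) for the span argument of Lemma \ref{lemma N=3n-2} to apply without collision.

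The main obstacle I anticipate is the degree-$4$ step itself: showing that the fourth-order part of $\w f(z,0)$ genuinely vanishes, rather than merely being expressible through lower-order data. The span relation (\ref{lemma 2.5 1}) is a statement about $D^\alpha F(0)$ as vectors in $\CC^{4n-6}$, but translating ``$D^\alpha F(0)\in\mathrm{span}_{|\beta|\le 3}$'' into ``the homogeneous degree-$4$ polynomial coefficients vanish'' requires carefully tracking which target components are occupied and exploiting the orthogonality/independence built into the normalization (the $\mu_{jk}$ structure in (\ref{mui and mujk}) and the splitting $\phi=(\Phi_0,\Phi_1)$). The delicate point is that when $(\Phi_1)^{(3,0)}\not\equiv0$ we are precisely in the regime where a genuine cubic term survives, so the argument must show degree $4$ dies while allowing degree $3$ to live; I expect this to be where the bulk of the case analysis in $\S4$--$\S7$ goes, splitting according to which of the $\xi_j$ vanish and which of the structural coefficients in (\ref{phi 1}) are active.
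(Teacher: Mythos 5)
Your proposal has a genuine gap, and it lies in the very formulation of your target. You propose to prove $\deg(f(z,0),\phi(z,0))\le 3$ by showing that every homogeneous piece of degree $\ge 4$ in $z$ vanishes on the slice $w=0$. That would prove the restriction is a \emph{polynomial} map of degree $\le 3$, which is strictly stronger than the proposition and is not what the paper establishes: the paper's proof solves a linear system and concludes, e.g.\ in Case A, that $\ov{\phi_{13}}(\chi,0)$ and $\ov{\phi_{23}}(\chi,0)$ have the form $\chi_3 P_1^{(2)}(\chi)/P_2^{(2)}(\chi)$ --- genuinely rational functions whose Taylor expansions at $0$ need not terminate. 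So the statement you aim at is in general unavailable, and no amount of coefficient-chasing will close it. Relatedly, the span relation (\ref{lemma 2.5 1}) cannot do the work you assign to it: the paper uses Lemma \ref{lemma N=3n-2} only to annihilate the entire components $\phi_{34},\dots,\phi_{3(n-2)}$ on the slice (equation (\ref{reduction 2})), which is possible precisely because their full $3$-jets at $0$ vanish by Lemma \ref{phi30}; for an active component such as $\phi_{13}$, whose low-order jet is nonzero, membership of $D^\alpha F(0)$ in $span_{|\beta|\le 3}\{D^\beta F(0)\}$ says nothing about the vanishing of any individual degree-$4$ coefficient. Your invocation of Lemma \ref{degree} is also backwards: that lemma converts the slice bound (applied at all boundary points $q$) into the global bound $\deg(F)\le 3$, which is exactly how the paper uses Proposition \ref{propdeg} afterwards in the proof of Theorem \ref{mainthm}; it cannot serve as an engine inside the proof of the proposition, which is a statement about a single slice of the already-normalized map.

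What is missing is the actual mechanism: complexify the basic identity (\ref{basic 1}), apply the CR vector fields ${\cal L}_j$, ${\cal L}_j{\cal L}_k$ and --- depending on which coefficient in (\ref{phi 1}) is nonzero, i.e.\ your Cases A--D --- one suitable third-order operator (${\cal L}_1^3$, ${\cal L}_1^2{\cal L}_2$, ${\cal L}_1{\cal L}_2{\cal L}_j$, or ${\cal L}_1^2{\cal L}_j$), then evaluate at $z=0$, $w=\eta=0$. This produces a nonsingular linear system $B\,\ov{\phi}(\chi,0)^t=A$ whose coefficient matrix has entries linear in $\chi$ and whose right-hand side is quadratic in $\chi$. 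Gaussian elimination yields the relations (\ref{rel}) and (\ref{33 eq}) expressing all $\ov{\phi_{st}}(\chi,0)$ through $\ov{\phi_{13}},\ov{\phi_{23}}$, and a final $2\times 2$ system whose Cramer-type solution exhibits these as rational of degree $\le 3$; the hypothesis $(\Phi_1)^{(3,0)}\not\equiv 0$ is what guarantees that the polynomial one must divide by (the quantity $C_2$ in Case A, $\mu_2\chi_1 D_2-\mu_1\chi_2 D_3$ in Case B, etc.) is not identically zero. You correctly anticipated the case analysis on (\ref{phi 1}), the role of $\kappa_0=2$ in keeping the system small, and the relevance of the basic identity on the Segre variety, but without the linear-system solve your outline has no route to a degree bound for a map that is only rational, not polynomial, on the slice.
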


The proof of Proposition \ref{propdeg} will be postponed to the next
section. Admitting this proposition temporarily, we continue the
proof of Theorem \ref{mainthm}.

Notice that $(\Phi_{q}^{***})_1^{(3,0)}(z)$ is smooth with respect
to $q$. From $(\Phi_{q_0}^{***})_1^{(3,0)}(z)\not\equiv 0$, we know
$(\Phi_{q}^{***})_1^{(3,0)}(z)\not\equiv 0$ for $q$ in a small
neighborhood of $q_0$. Applying Proposition \ref{propdeg} with $F$
replaced by $F_q^{***}$ for any $q\in \p\HH_n$ in a neighborhood of
$q_0$, we conclude  that deg$(F_q^{***}(z, 0))\le 3$. Since maps in
both Aut$(\p \HH_n) $ and Aut$(\p \HH_N) $ are linear fractional, we
know deg$(F_q(z, 0))\le 3$. Now we use Lemma \ref{degree}  to
complete the proof of Theorem \ref{mainthm}.
\end{proof}

Next we proceed to the proof of Proposition \ref{propdeg}.

\begin{proof}[Proof of Proposition \ref{propdeg}]

By \cite[Lemma 2.3]{HJX06}, we know $f_l(z, 0)=z_l$. From Lemma
\ref{phi30}, $\Phi_1=(\phi_{j,k})_{(j,k)\in {\cal S}_1} =(\phi_{33},
\phi_{34}, ..., \phi_{3 (n-2)})$ satisfies $\Phi^{(3)}_1(z,
0)=(\phi^{(3)}_{33}(z, 0), 0, ..., 0)$. Then we apply Lemma
\ref{lemma N=3n-2} to know
\begin{equation}
\label{reduction 2} \phi_{34}(z, 0)\equiv 0, \cdots, \phi_{3
(n-2)}(z, 0)\equiv 0.
\end{equation}
Hence  it suffices to prove
\begin{equation}
\label{reduction} deg\bigg(deg(\phi_{jk}(z, 0)), \ deg(\phi_{33}(z,
0)\bigg) \le 3,\ \ \ \forall (j,k)\in {\cal S}_0.
\end{equation}

By our notation,
$\phi^{(3,0)}_{33}=(\phi_{33}^{(j_1I_1+j_{n-1}I_{n-1})})_{j_1+\cdots+j_{n-1}=3,j_1+j_2\geq
1 }$. From (\ref{phi 1}) and (\ref{reduction 2}),
$(\Phi_1)_{33}^{(3,0)}(z)\not\equiv 0$ if and only if
\begin{equation}
\label{cases} (\phi_{33}^{(3I_1)}, \phi_{33}^{(3I_2)},
\phi_{33}^{(2I_1+I_2)}, \phi_{33}^{(I_1+2I_2)},
\phi_{33}^{(I_1+I_2+I_j)}, \phi_{33}^{(2I_1+I_j)},
\phi_{33}^{(2I_2+I_j)})\not=(0, 0, ...., 0).
\end{equation}
Thus it is suffices for us to consider the following cases:
\begin{equation}\begin{split}\label{ABCD}
 &\text{Case A: } \phi_{33}^{(3I_1)}\not=0,\ \text{Case A}':\
 \phi_{33}^{(3I_2)}\not=0,\\
&\text{Case B:}\  \phi_{33}^{(2I_1+I_2)}\not=0, \ \text{Case B}': \
\phi_{33}^{(I_1+2I_2)}\not=0,\\
&\text{Case C:}\ \phi_{33}^{(I_1+I_2+I_j)}\not=0,\\
&\text{Case D:} \ \phi_{33}^{(2I_1+I_j)}\not=0, \ \text{Case D}':\
\phi_{33}^{(2I_2+I_j)}\not=0.
\end{split}\end{equation}

In the rest of the paper, we will obtain an explicit expression for
$\phi_{33}(z, 0)$ and  $\phi_{jk}(z, 0)$ with $(j,k)\in {\cal S}_0$,
from which we obtain (\ref{reduction}).

We start with the following Chern-Moser equation.
\[
\frac{g(z, w)-\ov{g(z, w)}}{2i} = f(z, w)\cdot \ov{f(z, w)}+\phi(z,
w) \cdot \ov{\phi(z, w)}, \ \ \ \forall\ \text{Im}(w)=|z|^2.
\]
By complexification, we write
\begin{equation}
\label{basic 1} \frac{g(z, w) - \ov{g(\ov\chi, \ov\eta)}}{2i} =
\sum^{n-1}_{l=1} f_l(z, w) \ov{f_l(\ov\chi, \ov\eta)} +
\sum_{(s,t)\in {\cal S}} \phi_{st}(z, w) \ov{\phi_{st}(\ov\chi,
\ov\eta)},\ \ \forall\ \frac{w-\eta}{2i}=z\cdot \chi.
\end{equation}
Applying ${\cal L}_j:=\frac{\p }{\p z_j} + 2i \chi_j \frac{\p}{\p
    w}$ for $z=0$ and $w=\eta=0$ to the both sides of the above
identity, we obtain
\begin{equation}
\label{basic 2} \frac{{\cal L}_j g(0, 0)}{2i} = \sum^{n-1}_{l=1}
{\cal L}_j f_l(0, 0) \ov{f_l(\ov\chi, 0)} + \sum_{(s, t)\in {\cal
S}_0} {\cal L}_j\phi_{st}(0, 0) \ov{\phi_{st}(\ov\chi, 0)},
\end{equation}

\begin{equation}
\label{basic 3} \frac{{\cal L}_j {\cal L}_k g(0, 0)}{2i} =
\sum^{n-1}_{l=1} {\cal
    L}_j {\cal L}_k f_l(0, 0) \ov{f_l(\ov\chi, 0)} + \sum_{(s, t)\in {\cal S}_0}
{\cal L}_j{\cal L}_k \phi_{st}(0, 0) \ov{\phi_{st}(\ov\chi, 0)}.
\end{equation}
We notice that the index set ${\cal S}$ in (\ref{basic 1}) is
replaced by ${\cal S}_0$ in (\ref{basic 2}) and (\ref{basic 3})
because $\phi_{st}^{(2)}(0,0)=0$ for any $(s, t)\in {\cal S}_1$. The
proof of Proposition \ref{propdeg} will be completed in Sections
$4-7$ according to the different cases in (\ref{ABCD}).
\end{proof}

\section{Proof of Proposition \ref{propdeg} for Case A}

In this section,  we'll proceed to complete the proof of Proposition
\ref{propdeg} for Case A.

\begin{proof}[Proof of Proposition \ref{propdeg} for Case A]
Applying ${\cal L}_1^3$ to (\ref{basic 1}), we obtain
\begin{equation}
\label{basic 4} \frac{{\cal L}_1^3  g(0, 0)}{2i} = \sum^{n-1}_{l=1}
{\cal L}^3_1 f_l(0, 0) \ov{f_l(\ov\chi, 0)} + \sum_{(s, t)\in {\cal
S}_0} {\cal L}_1^3 \phi_{st}(0, 0) \ov{\phi_{st}(\ov\chi, 0)} +
{\cal L}_1^3 \phi_{33}(0, 0) \ov{\phi_{33}(\ov\chi, 0)}.
\end{equation}
Here we have used (\ref{reduction 2}). We can write these equations
in terms of matrix,

\begin{equation}\begin{split}\label{matrixb}
B
\begin{pmatrix}
\ov{f_1(\ov\chi, 0)}\\ \ov{f_{2}(\ov\chi, 0)}\\
\ov{\phi(\ov\chi, 0)}
\end{pmatrix} = \begin{pmatrix}
\chi_1\\
\chi_{2}\\
0\\
\vdots\\
0\\
\end{pmatrix}
\end{split}\end{equation}
where  $B$ is a non-singular $(2n-1)\times (2n-1)$ matrix evaluated
at $(0,0)$:

\begin{equation}\label{matrix}
B:=\begin{pmatrix} {\cal L}_jf_{h} & {\cal L}_j \phi_{hl} &
{\cal L}_j \phi_{h\alpha} & {\cal L}_j \phi_{33}\\
{\cal L}_j{\cal L}_kf_{h} &  {\cal L}_j{\cal L}_k \phi_{hl}
& {\cal L}_j{\cal L}_k \phi_{h\alpha} & {\cal L}_j{\cal L}_k \phi_{33} \\
{\cal L}_j{\cal L}_\beta f_{h} &
{\cal L}_j{\cal L}_\beta \phi_{hl} & {\cal L}_j{\cal L}_\beta \phi_{h\alpha} & {\cal L}_j{\cal L}_\beta \phi_{33}\\
{\cal L}^3_1 f_{h}  & {\cal L}^3_1 \phi_{hl} &
{\cal L}^3_1 \phi_{h\alpha} & {\cal L}^3_1 \phi_{33} \\
\end{pmatrix}_{1\leq j,h,l\leq 2, {3\leq \alpha,\beta\leq n-1}}
\end{equation}
In the rest of this section, we'll use (\ref{matrixb}) to solve
$\phi(z, 0)$.

\medskip

By our normalization (\ref{eqn:hao}), we have
\begin{equation} \label{L2 eq}
\begin{split}
& {\cal L}_1^2 f_1|_{(0,0)}=4i \chi_1 f_1^{(I_1+I_n)} = - 2 \mu_1 \chi_1,\ \ {\cal L}_1^2 f_2|_{(0,0)}=0, \\
& {\cal L}_1{\cal L}_2 f_1|_{(0,0)} = 2i \chi_2 f_1^{(I_1+I_n)} = -
\mu_1 \chi_2,\ \
{\cal L}_1 {\cal L}_2 f_2|_{(0,0)} = 2i \chi_1 f_2^{(I_2+I_n)} = -\mu_2 \chi_1, \\
& {\cal L}_2^2 f_1|_{(0,0)} = 0,\ \ \ {\cal L}_2^2 f_2|_{(0,0)} = 4i \chi_2 f_2^{(I_2+I_n)} = - 2 \mu_2 \chi_2, \\
& {\cal L}_1 {\cal L}_\alpha f_1|_{(0,0)} = 2i \chi_\alpha
f_1^{(I_1+I_n)} = - \mu_1 \chi_\alpha,
\ \ {\cal L}_1 {\cal L}_\alpha f_2|_{(0,0)} = 0,\\
& {\cal L}_2 {\cal L}_\alpha f_1|_{(0,0)} = 0,\ {\cal L}_2 {\cal
L}_\alpha f_2|_{(0,0)} = 2i \chi_\alpha f_2^{(I_2+I_n)} = - \mu_2
\chi_\alpha.
\end{split}
\end{equation}
Notice that $f_1^{(2I_1+I_n)}=-\sqrt{\mu_1}\ov{e_{1,11}},
f_2^{(2I_2+I_n)}= - \sqrt{\mu_1}\ov{e_{2,11}}$. Recall ${\cal
L}_1^3=\frac{\p^3}{\p z_1^3} + 6i \chi_1 \frac{\p^3}{\p z_1^2 \p w}
- 12\chi_1^2 \frac{\p^3}{\p z_1 \p w^2} - 8i \chi_1^3 \frac{\p^3}{\p
w^3}$.
Then
\begin{equation}
\begin{split}
{\cal L}_1^3 f_1|_{(0,0)} & = 6i \chi_1 \cdot 2 f_1^{(2I_1+I_n)}
-12\chi_1^2 \cdot 2 f_1^{(I_1+2I_n)}\\
& = - 12i \chi_1 \cdot \sqrt{\mu_1} \ov{e_{1,11}}-24\chi_1^2 f_1^{(I_1+2I_n)}:=C_1, \\
{\cal L}_1^3 f_2|_{(0,0)} & =  6i \chi_1 \cdot 2 f_2^{(2I_1+I_n)}
-12\chi_1^2 \cdot 2 f_2^{(I_1+2I_n)}\\
& = - 12i \chi_1 \cdot \sqrt{\mu_1} \ov{e_{2,11}} -24\chi_1^2
f_2^{(I_1+2I_n)}:
= C_2. \\
\end{split}
\end{equation}

\medspace

Set
\begin{equation}
B_1:= \begin{pmatrix}
{\cal L}_j{\cal L}_k \phi_{hl} & {\cal L}_j{\cal L}_k \phi_{h\alpha} & {\cal L}_j{\cal L}_k \phi_{33} \\
{\cal L}_j{\cal L}_\beta \phi_{hl} & {\cal L}_j{\cal L}_\beta
\phi_{h\alpha} & {\cal L}_j{\cal L}_\beta
\phi_{33}\\
{\cal L}^3_1 \phi_{hl} & {\cal L}^3_1 \phi_{h\alpha} &
{\cal L}^3_1 \phi_{33} \\
\end{pmatrix}_{1\leq j,h,l\leq 2, {3\leq \alpha,\beta\leq n-1}} \bigg|_{(0,0)}
=\begin{pmatrix}
B_{1, jk}\\
B_{1, j \beta}\\
B_{1, 33}
\end{pmatrix}.
\end{equation}
Then (\ref{matrixb}) takes the following form:
\begin{equation}
\label{new form} B_1 \ov{\phi}(\chi, 0)^t=A_1,
\end{equation}
where $A_1=(A_{1,11}, A_{1,12}, A_{1,22}, A_{1,1\alpha},
A_{1,2\alpha}, A_{1,33})^t$:
\begin{equation}
\begin{split}
& A_{1,11}=-{\cal L}_1^2 f_1|_{(0,0)}\chi_1 - {\cal L}_1^2 f_2|_{(0,0)} \chi_2 = 2 \mu_1 \chi_1^2,\\
& A_{1,12}=-{\cal L}_1{\cal L}_2 f_1 |_{(0,0)}\chi_1 - {\cal
L}_1{\cal L}_2 f_2|_{(0,0)} \chi_2
= \mu_1 \chi_1 \chi_2 + \mu_2 \chi_1\chi_2,\\
\end{split}
\end{equation}
\begin{equation}
\begin{split}
& A_{1,22}=-{\cal L}_2^2 f_1|_{(0,0)} \chi_1 - {\cal L}_2^2 f_2|_{(0,0)} \chi_2 = 2\mu_2 \chi_2^2, \\
& A_{1,1\alpha} = - {\cal L}_1{\cal L}_\alpha f_1|_{(0,0)} \chi_1 - {\cal L}_1{\cal L}_\alpha f_2|_{(0,0)} \chi_2 = \mu_1  \chi_1 \chi_\alpha,\\
& A_{1,2\alpha} = -{\cal L}_1{\cal L}_\alpha f_1|_{(0,0)} \chi_1- {\cal L}_1{\cal L}_\alpha f_2|_{(0,0)}\chi_2 =\mu_2 \chi_2 \chi_\alpha,\\
& A_{1,33}=-{\cal L}_1^3 f_1|_{(0,0)} \chi_1 - {\cal L}_1^3
f_2|_{(0,0)} \chi_2 = -\chi_1 C_1 - \chi_2 C_2,
\end{split}
\end{equation}
and
\begin{equation}
\label{def of B1}
\begin{split}
B_{1,11}&=(2\mu_{11} + 4i \chi_1 e_{1,11}, 4i \chi_1 e_{1,12}, 4i\chi_1 e_{1,22}, 4i\chi_1 e_{1,1\alpha}, 4i \chi_1 e_{1,2\alpha}, 4i \chi_1 e_{1,33}),\\
B_{1,12}&=(2 \chi_1 e_{2,11} + 2i \chi_2 e_{1,11}, \mu_{12} + 2i \chi_1 e_{2,12} + 2i \chi_2 e_{1,12}, 2i\chi_1 e_{2,22}+2i\chi_2 e_{1,22}, \\
&\ \ \ \ \ \ 2i\chi_1 e_{2,1\alpha}+2i\chi_2 e_{1,1\alpha}, 2i
\chi_1 e_{2,2\alpha}
+2i \chi_2 e_{1,2\alpha}, 2i \chi_1 e_{2,33}+2i\chi_2e_{1,33}),\\
B_{1,22}&=(4 i \chi_2 e_{2,11}, 4i \chi_2 e_{2,12}, 2 \mu_{22} + 4i\chi_2 e_{2,22}, 4i\chi_2 e_{2,1\alpha}, 4i \chi_2 e_{2,2\alpha}, 4i \chi_2 e_{2,33}),\\
B_{1,1\beta}&=(2i \chi_\beta e_{1,11}, 2i \chi_\beta e_{1,12},
2i\chi_\beta e_{1,22},
\mu_{1\alpha} \delta^\beta_\alpha + 2i\chi_\beta e_{1,1\alpha}, 2i \chi_\beta e_{1,2\alpha}, 2i \chi_\beta e_{1,33}),\\
B_{1,2\beta}&=(2i \chi_\beta e_{2,11}, 2i \chi_\beta e_{2,12},
2i\chi_\beta e_{2,22},
2i\chi_\beta e_{2,1\alpha}, \mu_{2\alpha} \delta^\beta_\alpha + 2i \chi_\beta e_{2,2\alpha}, 2i \chi_\beta e_{2,33}),\\
B_{1,33} & = ({\cal L}_1^3 \phi_{11}, {\cal L}_1^3 \phi_{12}, {\cal
L}_1^3 \phi_{22},
{\cal L}_1^3 \phi_{1\alpha},{\cal L}_1^3 \phi_{2\alpha},  {\cal L}_1^3 \phi_{33}) |_{(0,0)}.\\
\end{split}
\end{equation}
By (\ref{phi (3,0)})-(\ref{phi 2}), $B_{1,33}$ can be calculated as
follows.

\begin{equation*}
\begin{split}
&{\cal L}_1^3 \phi_{11}|_{(0,0)} = 6\cdot 2 i \ov{e_{1,11}} + 6i
\chi_1 \cdot 2\cdot 2i
\bigg(\frac{\sqrt{\mu_1}}{\mu_1} \ov{e_{1,11}}e_{1,11} + \frac{\sqrt{\mu_1}}{\mu_2}\ov{e_{2,11}}e_{2,11} \\
&\ \ -\frac{1}{\sqrt{\mu_1}} f_1^{(I_1+2I_n)}\bigg)
 -12 \chi_1^2 4i\bigg(-\frac{1}{\mu_1}e_{1,11} f_1^{(I_1+2I_n)} - \frac{1}{\mu_2} e_{2,11} f_2^{(I_1+2I_n)}\bigg)\\
& = - \frac{1}{\sqrt{\mu_1} \chi_1} C_1 - 2i \bigg(\frac{e_{1,11}}{\mu_1} C_1 + \frac{e_{2,11}}{\mu_2} C_2 \bigg),\\
\end{split}
\end{equation*}

\begin{equation*}
\begin{split}
&{\cal L}_1^3 \phi_{12}|_{(0,0)} = 6
\frac{\sqrt{\mu_1}}{\sqrt{\mu_1+\mu_2}} \cdot 2 i \ov{e_{2,11}} + 6i
\chi_1 \cdot 2\cdot 2i
\bigg(\frac{\sqrt{\mu_1}}{\mu_1} \ov{e_{1,11}}e_{1,12} + \frac{\sqrt{\mu_1}}{\mu_2}\ov{e_{2,11}}e_{2,12} \\
&\ \ -\frac{1}{\sqrt{\mu_1+\mu_2}} f_2^{(I_1+2I_n)}\bigg)
-12 \chi_1^2 4i\bigg(-\frac{1}{\mu_1}e_{1,12} f_1^{(I_1+2I_n)} - \frac{1}{\mu_2} e_{2,12} f_2^{(I_1+2I_n)}\bigg)\\
& = - \frac{1}{\sqrt{\mu_1+\mu_2} \chi_1} C_2 - 2i \bigg(\frac{e_{1,12}}{\mu_1} C_1 + \frac{e_{2,12}}{\mu_2} C_2 \bigg),\\
\end{split}
\end{equation*}

\begin{equation*}
\begin{split}
&{\cal L}_1^3 \phi_{22}|_{(0,0)} = 6i \chi_1 \cdot 2\cdot 2i
\bigg(\frac{\sqrt{\mu_1}}{\mu_1} \ov{e_{1,11}}e_{1,22} + \frac{\sqrt{\mu_1}}{\mu_2}\ov{e_{2,11}}e_{2,22}\bigg)\\
&-12 \chi_1^2 4i\bigg(-\frac{1}{\mu_1}e_{1,22} f_1^{(I_1+2I_n)} -
\frac{1}{\mu_2} e_{2,22} f_2^{(I_1+2I_n)}\bigg)
=  - 2i \bigg(\frac{e_{1,22}}{\mu_1} C_1 + \frac{e_{2,22}}{\mu_2} C_2 \bigg).\\
\end{split}
\end{equation*}

For $j=1$ or $2$, we have

\begin{equation*}
\begin{split}
&{\cal L}_1^3 \phi_{j\alpha}|_{(0,0)} = 6i \chi_1 \cdot 2\cdot 2i
\bigg(\frac{\sqrt{\mu_1}}{\mu_1} \ov{e_{1,11}}e_{1,j\alpha} +
\frac{\sqrt{\mu_1}}{\mu_2}\ov{e_{2,11}}e_{2,j\alpha}
\bigg)\\
&\ \ \ -12 \chi_1^2 4i\bigg(-\frac{1}{\mu_1}e_{1,j\alpha}
f_1^{(I_1+2I_n)} - \frac{1}{\mu_2} e_{2,j\alpha}
f_2^{(I_1+2I_n)}\bigg)
 = - 2i \bigg(\frac{e_{1,j\alpha}}{\mu_1} C_1 + \frac{e_{2,j\alpha}}{\mu_2} C_2 \bigg).\\
\end{split}
\end{equation*}

We also have
\begin{equation*}
\begin{split}
&{\cal L}_1^3 \phi_{33}|_{(0,0)} = 6\cdot
\frac{2\mu_1}{\sqrt{\mu_2(\mu_1+\mu_2)}} \ov{e_{2,11}} + 6i \chi_1
\cdot 2\cdot 2i\bigg(\frac{\sqrt{\mu_1}}{\mu_1} \ov{e_{1,11}}\cdot
e_{1,33}
+ \frac{\sqrt{\mu_1}}{\mu_2} \ov{e_{2,11}} \cdot e_{2,33}\\
&\ \ + \frac{\sqrt{\mu_1} i}{\sqrt{\mu_2(\mu_1+\mu_2)}}
f_2^{(I_1+2I_n)}\bigg) - 12 \chi_1^2 \cdot 2\cdot
2i\bigg(-\frac{1}{\mu_1}e_{1,33}f_1^{(I_1+2I_n)} - \frac{1}{\mu_2}
e_{2,33}f_2^{(I_1+2I_n)}\bigg)\\& = \frac{\sqrt{\mu_1}
i}{\sqrt{\mu_2(\mu_1+\mu_2)} \chi_1} C_2
 - 2i
\bigg(\frac{e_{1,33}}{\mu_1} C_1+\frac{e_{2,33}}{\mu_2} C_2\bigg).
\end{split}
\end{equation*}

\medspace

We write $B_2=(Id-G_1)B_1$ and $A_2=(Id-G_1)A_1$ so that
\begin{equation}
\label{new form b} B_2 \ov{\phi}(\chi, 0)^t=A_2
\end{equation}
where
\[
G_1:=
\begin{pmatrix}
0 & 0 & 0 & \frac{2\chi_1}{\chi_3} & 0 & 0 & 0 & 0 \\
0 & 0 & 0 & \frac{\chi_2}{\chi_3} & 0 & \frac{\chi_1}{\chi_3} & 0 & 0 \\
0 & 0 & 0 & 0 & 0 & \frac{2\chi_2}{\chi_3} & 0 & 0 \\
0 & 0 & 0 & 0 & 0 & 0 & 0 & 0 \\
0 & 0 & 0 & \frac{\chi_\alpha}{\chi_3} & 0 & 0 & 0 & 0 \\
0 & 0 & 0 & 0 & 0 & 0 & 0 & 0 \\
0 & 0 & 0 & 0 & 0 & \frac{\chi_\alpha}{\chi_3} & 0 & 0 \\
0 & 0 & 0 & \frac{-C_1}{\mu_1 \chi_3} & 0 & \frac{-C_2}{\mu_2 \chi_3} & 0 & 0 \\
\end{pmatrix}.
\]
Then
\begin{equation*}
\begin{split}
& B_{2,11}=B_{1,11} - \frac{2\chi_1}{\chi_3} B_{1,13}=(2\mu_{11}, 0,
0, -\frac{2\chi_1}{\chi_3}
\mu_{13}, 0, ..., 0),\\
& B_{2,12}=B_{1,12} -\bigg(\frac{\chi_2}{\chi_3} B_{1,13}
+ \frac{\chi_1}{\chi_3} B_{1,23} \bigg)=(0, \mu_{12}, 0, -\frac{\chi_2}{\chi_3} \mu_{13}, 0, ..., 0, -\frac{\chi_1}{\chi_3} \mu_{23}, 0, ..., 0), \\
& B_{2,22} = B_{1,22} - \frac{2\chi_2}{\chi_3} B_{1,23}=(0,0,
2\mu_{22}, 0, ..., 0,
-\frac{2\chi_2}{\chi_3} \mu_{23}, 0, ..., 0),\\
& B_{2,13}=B_{1,13},\\
& B_{2,23}=B_{1,23}, \\
\end{split}
\end{equation*}
\begin{equation*}
\begin{split}
& B_{2,1\alpha}=B_{1,1\alpha} - \frac{\chi_\alpha}{\chi_3} B_{1,13}
=(0,0,0, -\frac{\chi_\alpha}{\chi_3} \mu_{13}, 0, .., 0, \mu_{1\alpha}, 0, ..., 0),\\
& B_{2,2\alpha}=B_{1,2\alpha} - \frac{\chi_\alpha}{\chi_3} B_{1,23}
=(0,0,0, 0, ..., 0, -\frac{\chi_\alpha}{\chi_3} \mu_{23}, 0, .., 0, \mu_{2\alpha}, 0, ..., 0),\\
& B_{2,33} = B_{1,33} + \frac{B_{1,13}}{\mu_1 \chi_3} C_1 + \frac{B_{1,23}}{\mu_2 \chi_3} C_2\\
& =\bigg(\frac{-C_1}{\sqrt{\mu_1}\chi_1} ,
\frac{-C_2}{\sqrt{\mu_1+\mu_2}\chi_1}, 0,
\frac{C_1}{\sqrt{\mu_1}\chi_3}, 0, ..., 0, \frac{C_2}{\sqrt{\mu_2}
\chi_3}, 0, ..., 0, \frac{\sqrt{\mu_1} i
C_2}{\sqrt{\mu_2(\mu_1+\mu_2)}\chi_1}\bigg),
\end{split}
\end{equation*}
and
\begin{equation}
\begin{split}
& A_{2,11} = 2\mu_1 \chi_1^2 - \frac{2 \chi_1}{\chi_3} \mu_1 \chi_1 \chi_3 = 0,\\
& A_{2,12} = (\mu_1+\mu_2) \chi_1 \chi_2 -
\bigg(\frac{\chi_2}{\chi_3} \mu_1 \chi_1\chi_3
+ \frac{\chi_1}{\chi_3} \mu_2 \chi_2 \chi_3 \bigg) = 0,\\
& A_{2,22} = 2 \mu_2 \chi_2^2 - \frac{2 \chi_2}{\chi_3} \mu_2 \chi_2 \chi_3 = 0,\\
& A_{2,13} = A_{1,13},\ \ A_{2,23} = A_{1, 23}, \\
& A_{2,1\alpha} = \mu_1 \chi_1 \chi_\alpha - \frac{\chi_\alpha}{\chi_3} \mu_1 \chi_1 \chi_3 = 0,\\
& A_{2,2\alpha} = \mu_2 \chi_2 \chi_\alpha - \frac{\chi_\alpha}{\chi_3} \mu_2 \chi_2 \chi_3 = 0,\\
& A_{2, 33} = - \chi_1 C_1 - \chi_2 C_2 + \frac{\mu_1 \chi_1
\chi_3}{\mu_1 \chi_3} C_1 + \frac{\mu_2 \chi_2\chi_3}{\mu_2 \chi_3}
C_2 = 0,
\end{split}
\end{equation}
where $4\le \alpha\le n-1$. From (\ref{new form b}) we have
\begin{equation}
\begin{split}\label{rel}
&\ov{\phi_{11}}(\chi, 0)=\frac{\chi_1}{\chi_3} \ov{\phi_{13}}(\chi,
0),\ \ \ov{\phi_{12}}(\chi, 0)=\frac{\mu_{13}\chi_2}{\mu_{12}\chi_3}
\ov{\phi_{13}}(\chi, 0)
+ \frac{\mu_{23}\chi_1}{\mu_{12}\chi_3} \ov{\phi_{23}}(\chi, 0), \\
& \ov{\phi_{22}}(\chi, 0)=\frac{\chi_2}{\chi_3} \ov{\phi_{23}}(\chi,
0),\ \ \ov{\phi_{1\alpha}}(\chi, 0)=\frac{\chi_\alpha}{\chi_3}
\ov{\phi_{13}}(\chi, 0),\ \ \ov{\phi_{2\alpha}}(\chi,
0)=\frac{\chi_\alpha}{\chi_3} \ov{\phi_{23}}(\chi, 0),
\end{split}
\end{equation}
where $4\le \alpha\le n-1$. By considering $B_{2,13} \ov{\phi}(\chi,
0)^t=A_{2,13}$, we obtain
\begin{equation}
\begin{split}
& 2i \chi_3 e_{1,11} \ov{\phi_{11}} + 2i \chi_3 e_{1,12}
\ov{\phi_{12}}
+ 2i \chi_3 e_{1,22} \ov{\phi_{22}}+\mu_{13}\ov{\phi_{13}}\\
&+ \sum^{n-1}_{\alpha=3} 2i \chi_3 e_{1,1\alpha} \ov{\phi_{1\alpha}}
+ \sum^{n-1}_{\alpha=3} 2i \chi_3 e_{1,2\alpha} \ov{\phi_{2\alpha}}
+ 2i e_{1,33} \chi_3 \ov{\phi_{33}} = \mu_1 \chi_1 \chi_3.
\end{split}
\end{equation}

Substituting (\ref{rel}) into this equation, we get

\begin{equation} \label{2 13 eq}
\begin{split}
& \bigg[\mu_{13} + 2i\bigg(e_{1,11}\chi_1 +
\frac{\sqrt{\mu_1}}{\mu_{12}} e_{1,12} \chi_2
+ \sum^{n-1}_{\alpha=3} e_{1,1\alpha} \chi_\alpha \bigg) \bigg] \ov{\phi_{13}}\\
& + \bigg(e_{1,12} \chi_2 + \frac{\sqrt{\mu_2}}{\mu_{12}} e_{1,12}
\chi_1 + \sum^{n-1}_{\alpha=3} e_{1,2\alpha} \chi_\alpha\bigg)
\ov{\phi_{23}} + 2i e_{1,33} \chi_3 \ov{\phi_{33}} = \mu_1 \chi_1
\chi_3.
\end{split}
\end{equation}

\medspace

We can rewrite (\ref{2 13 eq}) as
\begin{equation} \label{e1}
(\mu_{13} + 2i \sqrt{\mu_1} E_1) \ov{\phi_{13}} + 2i \sqrt{\mu_2}
E_2 \ov{\phi_{23}} + 2i e_{1,33} \chi_3 \ov{\phi_{33}} = \mu_1
\chi_1 \chi_3
\end{equation}
where
\begin{equation*}
E_1= \frac{1}{\mu_{11}} e_{1,11}\chi_1 + \frac{1}{\mu_{12}} e_{1,12}
\chi_2 + \sum^{n-1}_{\alpha=3} \frac{1}{\mu_{1\alpha}} e_{1,1\alpha}
\chi_\alpha,\ \ E_2= \frac{1}{\mu_{22}} e_{1,22}\chi_2 +
\frac{1}{\mu_{12}} e_{1,12} \chi_1 + \sum^{n-1}_{\alpha=3}
\frac{1}{\mu_{2\alpha}} e_{1,2\alpha} \chi_\alpha.
\end{equation*}

\medspace

Similarly by considering $B_{2,23} \ov{\phi}(\chi, 0)^t = A_{2,23}$,
we obtain
\begin{equation} \label{e2}
\begin{split}
2i \sqrt{\mu_1} E_3 \ov{\phi_{13}} + (\mu_{23} + 2i \sqrt{\mu_2}
E_4) \ov{\phi_{23}} + 2i e_{2,33} \chi_3 \ov{\phi_{33}} = \mu_2
\chi_2 \chi_3
\end{split}
\end{equation}
where
\begin{equation*}
E_3= \frac{1}{\mu_{11}} e_{2,11}\chi_1 + \frac{1}{\mu_{12}} e_{2,12}
\chi_2 + \sum^{n-1}_{\alpha=3} \frac{1}{\mu_{1\alpha}} e_{2,1\alpha}
\chi_\alpha,\ \ E_4= \frac{1}{\mu_{22}} e_{2,22}\chi_2 +
\frac{1}{\mu_{12}} e_{2,12} \chi_1 + \sum^{n-1}_{\alpha=3}
\frac{1}{\mu_{2\alpha}} e_{2,2\alpha} \chi_\alpha.
\end{equation*}

\medspace

Substituting (\ref{rel}) into $B_{2,33} \ov{\phi}(\chi, 0)^t =
A_{2,33}$, we yield
\begin{equation}
\begin{split}
& -\frac{C_1}{\sqrt{\mu_1}\chi_1} \frac{\chi_1}{\chi_3}
\ov{\phi_{13}}(\chi, 0) -
\frac{C_2}{\sqrt{\mu_1+\mu_2}\chi_1}\bigg(\frac{\sqrt{\mu_1}
\chi_2}{\mu_{12} \chi_3} \ov{\phi_{13}}(\chi, 0)
+ \frac{\sqrt{\mu_2}\chi_1}{\mu_{12} \chi_3} \ov{\phi_{23}}(\chi, 0) \bigg)\\
& + \frac{C_1}{\sqrt{\mu_1}\chi_3} \ov{\phi_{13}}(\chi, 0) +
\frac{C_2}{\sqrt{\mu_2} \chi_3} \ov{\phi_{23}}(\chi, 0) +
\frac{\sqrt{\mu_1} i C_2}{\sqrt{\mu_2(\mu_1+\mu_2)}\chi_1}
\ov{\phi_{33}}(\chi, 0)=0.
\end{split}
\end{equation}
We notice that the polynomial $C_2\not=0$ because $e_{2,11}\not=0$.
Here we used the fact that $\phi_{33}^{(3I_1)}=\frac{2
\mu_1}{\sqrt{\mu_2(\mu_1+\mu_2)}} \ov{e_{2,11}}$ $\not=0$ in Case A.
Then divided by $C_2$, we obtain from above
\[
\frac{\sqrt{\mu_1}\chi_2}{\mu_1+\mu_2} \ov{\phi_{13}}(\chi, 0) -
\frac{\mu_1 \chi_1}{\sqrt{\mu_2} (\mu_1+\mu_2)} \ov{\phi_{23}}(\chi,
0) - \frac{\sqrt{\mu_1} i \chi_3}{\sqrt{(\mu_1+\mu_2)\mu_2}}
\ov{\phi_{33}}(\chi, 0) = 0,
\]
and hence
\begin{equation} \label{33 eq}
\ov{\phi_{33}}(\chi, 0) =
\frac{i}{\sqrt{\mu_1+\mu_2}\chi_3}(-\sqrt{\mu_2} \chi_2
\ov{\phi_{13}} + \sqrt{\mu_1} \chi_1 \ov{\phi_{23}})(\chi, 0).
\end{equation}
Then (\ref{e1}) and (\ref{e2}) further take the form at $(\chi, 0)$:
\begin{equation}
\begin{split}
&\bigg[\mu_{13}+2i\sqrt{\mu_1} E_1 +
2\sqrt{\frac{\mu_2}{\mu_1+\mu_2}} e_{1,33}
\chi_2\bigg]\ov{\phi_{13}} +\bigg[2i \sqrt{\mu_2} E_2 -
2\sqrt{\frac{\mu_1}{\mu_1+\mu_2}} e_{1,33}
\chi_1\bigg]\ov{\phi_{23}} =
\mu_1 \chi_1 \chi_3,\\
&\bigg[2i \sqrt{\mu_1} E_3 + 2\sqrt{\frac{\mu_2}{\mu_1+\mu_2}}
e_{2,33} \chi_2\bigg]\ov{\phi_{13}} + \bigg[\mu_{23}+2i\sqrt{\mu_2}
E_4 - 2\sqrt{\frac{\mu_1}{\mu_1+\mu_2}} e_{2,33}
\chi_1\bigg]\ov{\phi_{23}} = \mu_2 \chi_2 \chi_3.
\end{split}
\end{equation}
It implies that $\ov{\phi_{13}}(\chi, 0)$ and $\ov{\phi_{23}}(\chi,
0)$ take the form $\chi_3 \frac{P_1^{(2)}(\chi)}{P_2^{(2)}(\chi)}$
where $P_1^{(2)}(\chi)$ and $P_2^{(2)}(\chi)$ are polynomials of
degree 2. Substituting these forms back to (\ref{rel}) and (\ref{33
eq}),  we conclude that (\ref{reduction}) is proved and hence
$deg(F(z, 0))\le 3$.
 This completes the proof of Proposition \ref{propdeg} for Case A.
 Case A$'$ can be similarly proved.
\end{proof}

\section{Proof of Proposition \ref{propdeg} for Case B}

This section is devoted to  the proof of Proposition \ref{propdeg}
for Case B.

\begin{proof}[Proof of Proposition \ref{propdeg} for Case B]
In this case, we suppose  $\phi^{(2I_1+I_2)}_{33}\neq 0$. Thus
\[
\frac{\mathcal{L}_1^2\mathcal{L}_2  g}{2i}|_{(0, 0)} =
\sum^{n-1}_{l=1} \mathcal{L}_1^2\mathcal{L}_2 f_l|_{(0, 0)}
\ov{f_l(\ov\chi, 0)} + \sum_{(s, t)\in {\cal S}_0}
\mathcal{L}_1^2\mathcal{L}_2 \phi_{st}|_{(0, 0)}
\ov{\phi_{st}(\ov\chi, 0)} + \mathcal{L}_1^2\mathcal{L}_2
\phi_{33}|_{(0, 0)} \ov{\phi_{33}(\ov\chi,
    0)}.
\]
Write
\begin{equation}\begin{split}
D_1=&-4i\chi_1\cdot \mu_{12}\ov{e_{1,12}} +4i\chi_1\cdot
2i\chi_2\cdot f_1^{(I_1+2I_n)}+(2i\chi_1)^2\cdot
2f_1^{(I_2+2I_n)},\\
D_2=&-4i\chi_2\cdot \mu_{11}\ov{e_{1,11}} +4i\chi_1\cdot
2i\chi_2\cdot f_1^{(I_1+2I_n)},\\
D_3=&-4i\chi_1\cdot \mu_{12}\ov{e_{2,12}} +4i\chi_1\cdot
2i\chi_2\cdot f_2^{(I_1+2I_n)}+(2i\chi_1)^2\cdot
2f_2^{(I_2+2I_n)},\\
D_4=&-4i\chi_2\cdot \mu_{11}\ov{e_{2,11}} +4i\chi_1\cdot
2i\chi_2\cdot f_2^{(I_1+2I_n)}.
\end{split}\end{equation}
A direct computation shows that $\mathcal{L}_1^2\mathcal{L}_2 =
\frac{\p ^3}{\p z_1^2 \p
    z_2}+2i\chi_2\frac{\p ^3}{\p z_1^2 \p w}+4i\chi_1\frac{\p ^3}{\p z_1
    \p z_2 \p w}+4i\chi_1\cdot 2i\chi_2\frac{\p ^3}{\p z_1 \p
    w^2}
+(2i\chi_1)^2\frac{\p ^3}{\p z_2 \p w^2}+(2i\chi_1)^2\cdot
2i\chi_2\frac{\p ^3}{ \p w^3}$. Thus
\begin{equation}
\begin{split}
& \mathcal{L}_1^2\mathcal{L}_2 f_1|_{(0,0)} =
2i\chi_2\cdot 2f_1^{(2I_1+I_n)}+4i\chi_1f_1^{(I_1+I_2+I_n)}\\
&\ +4i\chi_1\cdot 2i\chi_2\cdot 2f_1^{(I_1+2I_n)}
 + (2i\chi_1)^2\cdot 2f_1^{(I_2+2I_n)} = D_1+D_2.
\end{split}
\end{equation}
\begin{equation}\begin{split}
&\mathcal{L}_1^2\mathcal{L}_2 f_2|_{(0,0)} = 2i\chi_2\cdot
2f_2^{(2I_1+I_n)}+4i\chi_1f_2^{(I_1+I_2+I_n)}\\
&+4i\chi_1\cdot 2i\chi_2\cdot 2f_2^{(I_1+2I_n)}+(2i\chi_1)^2\cdot
2f_2^{(I_2+2I_n)} = D_3+D_4.
\end{split}\end{equation}
As in (\ref{new form}), we have
\begin{equation}
\label{new formb app} {\cal B}_1 \ov{\phi}(\chi, 0)^t={\cal A}_1.
\end{equation}
where
\begin{equation}
{\cal B}_1:= \begin{pmatrix}
{\cal L}_j{\cal L}_k \phi_{hl} & {\cal L}_j{\cal L}_k \phi_{h\alpha} & {\cal L}_j{\cal L}_k \phi_{33} \\
{\cal L}_j{\cal L}_\beta \phi_{hl} & {\cal L}_j{\cal L}_\beta
\phi_{h\alpha} & {\cal L}_j{\cal L}_\beta
\phi_{33}\\
{\cal L}^2_1 {\cal L}_2\phi_{hl} & {\cal L}^2_1 {\cal L}_2
\phi_{h\alpha} &
{\cal L}^2_1 {\cal L}_2 \phi_{33} \\
\end{pmatrix}_{1\leq j,k,h,l\leq 2, {3\leq \alpha,\beta\leq n-1}} \bigg|_{(0,0)}
=\begin{pmatrix}
B_{1, jk}\\
B_{1, j \beta}\\
{\cal B}_{1, 33}
\end{pmatrix}.
\end{equation}
and ${\cal A}_1=(A_{1,11}, A_{1,12}, A_{1,22}, A_{1,1\alpha},
A_{1,2\alpha}, {\cal A}_{1,33})^t$ where ${\cal A}_{1,33}=-{\cal
L}_1^2 {\cal L}_2 f_1|_{(0,0)} \chi_1 - {\cal L}_1^2 {\cal L}_2
f_2|_{(0,0)}$ $\chi_2 = -(D_1+D_2) \chi_1  - (D_3+D_4) \chi_2$, and
${\cal B}_{1,33}=({\cal L}_1^2{\cal L}_2  \phi_{hl}|_{(0,0)},\ {\cal
L}_1^2{\cal L}_2 \phi_{h\alpha}|_{(0,0)},\ {\cal L}_1^2{\cal L}_2
\phi_{33}|_{(0,0)})$. Recall
$\mathcal{L}_1^2\mathcal{L}_2\phi|_{(0,0)} =
2\phi^{(2I_1+I_2)}+2i\chi_2\cdot
2\phi^{(2I_1+I_n)}+4i\chi_1\phi^{(I_1+I_2+I_n)} +4i\chi_1\cdot
2i\chi_2\cdot 2\phi^{(I_1+2I_n)}+(2i\chi_1)^2\cdot
2\phi^{(I_2+2I_n)}$. We calculate ${\cal B}_{1,33}$ in details as
follows.

\begin{equation}\begin{split}
& \mathcal{L}_1^2\mathcal{L}_2\phi_{11}|_{(0,0)} =  2\cdot
\frac{2i}{\sqrt{\mu_1}}\mu_{12}\ov{e_{1,12}}+2i\chi_2\cdot
2\Big(2i(\frac{\mu_{11}}{\mu_1}\ov{e_{1,11}}e_{1,11}
+\frac{\mu_{11}}{\mu_2}\ov{e_{2,11}}e_{2,11}) \\
&\ \ \ -\frac{2i}{\sqrt{\mu_1}}f_1^{(I_1+2I_n)}\Big)
 + 4i\chi_1\cdot \Big(2i(\frac{\mu_{12}}{\mu_1}\ov{e_{1,12}}e_{1,11}
+\frac{\mu_{12}}{\mu_2}\ov{e_{2,12}}e_{2,11})-\frac{2i}{\sqrt{\mu_1}}f_1^{(I_2+2I_n)}\Big)\\
&\ \ \ + 4i\chi_1\cdot 2i\chi_2\cdot
2\Big(-\frac{2i}{{\mu_1}}e_{1,11}f_1^{(I_1+2I_n)}-\frac{2i}{{\mu_2}}e_{2,11}f_2^{(I_1+2I_n)}\Big)\\
&\ \ \ + (2i\chi_1)^2\cdot
2\Big(-\frac{2i}{{\mu_1}}e_{1,11}f_1^{(I_2+2I_n)}-\frac{2i}{{\mu_2}}e_{2,11}f_2^{(I_2+2I_n)}\Big)\\
& =
-\frac{2i}{\mu_1}e_{1,11}(D_1+D_2)-\frac{2i}{\mu_2}e_{2,11}(D_3+D_4)-\frac{D_1}{\sqrt{\mu_1}\chi_1},
\end{split}\end{equation}

\begin{equation}\begin{split}
& \mathcal{L}_1^2\mathcal{L}_2\phi_{12}|_{(0,0)} =  2\cdot
\frac{2i}{{\mu_{12}}}(\mu_{12}\ov{e_{2,12}}+\mu_{11}\ov{e_{1,11}}\big)\\
&\ \ \ + 2i\chi_2\cdot
2\Big(2i(\frac{\mu_{11}}{\mu_1}\ov{e_{1,11}}e_{1,12}
+\frac{\mu_{11}}{\mu_2}\ov{e_{2,11}}e_{2,12})-\frac{2i}{{\mu_{12}}}f_2^{(I_1+2I_n)}\Big)\\
&\ \ \ + 4i\chi_1\cdot
\Big(2i(\frac{\mu_{12}}{\mu_1}\ov{e_{1,12}}e_{1,12}
+\frac{\mu_{12}}{\mu_2}\ov{e_{2,12}}e_{2,12})-\frac{2i}{\mu_{12}}(f_1^{(I_1+2I_n)}+f_2^{(I_2+2I_n)})\Big)\\
&\ \ \ + 4i\chi_1\cdot 2i\chi_2\cdot
2\Big(-\frac{2i}{{\mu_1}}e_{1,12}f_1^{(I_1+2I_n)}-\frac{2i}{{\mu_2}}e_{2,12}f_2^{(I_1+2I_n)}\Big)\\
&\ \ \ + (2i\chi_1)^2\cdot
2\Big(-\frac{2i}{{\mu_1}}e_{1,12}f_1^{(I_2+2I_n)}-\frac{2i}{{\mu_2}}e_{2,12}f_2^{(I_2+2I_n)}\Big)\\
\end{split}\end{equation}

\begin{equation*}\begin{split}
& =
-\frac{2i}{{\mu_1}}e_{1,12}(D_1+D_2)-\frac{2i}{\mu_2}e_{2,12}(D_3+D_4)
-\frac{1}{\mu_{12}\chi_1}D_3-\frac{1}{\mu_{12}\chi_2}D_2,
\end{split}\end{equation*}

\begin{equation}\begin{split}
& \mathcal{L}_1^2\mathcal{L}_2\phi_{22}|_{(0,0) } = 2\cdot
\frac{2i}{{\mu_{22}}}\mu_{11}\ov{e_{2,11}}+2i\chi_2\cdot 2\cdot
2i\big(\frac{\mu_{11}}{\mu_1}\ov{e_{1,11}}e_{1,22}
+\frac{\mu_{11}}{\mu_2}\ov{e_{2,11}}e_{2,22}\big)\\
&\ \ \ + 4i\chi_1\cdot
\Big(2i(\frac{\mu_{12}}{\mu_1}\ov{e_{1,12}}e_{1,22}
+\frac{\mu_{12}}{\mu_2}\ov{e_{2,12}}e_{2,22})-\frac{2i}{\sqrt{\mu_{2}}}f_2^{(I_1+2I_n)}\Big)\\
&\ \ \ + 4i\chi_1\cdot 2i\chi_2\cdot
2\Big(-\frac{2i}{{\mu_1}}e_{1,22}f_1^{(I_1+2I_n)}-\frac{2i}{{\mu_2}}e_{2,22}f_2^{(I_1+2I_n)}\Big)\\
&\ \ \ + (2i\chi_1)^2\cdot
2\Big(-\frac{2i}{{\mu_1}}e_{1,22}f_1^{(I_2+2I_n)}-\frac{2i}{{\mu_2}}e_{2,22}f_2^{(I_2+2I_n)}\Big)\\
& =
-\frac{2i}{{\mu_1}}e_{1,22}(D_1+D_2)-\frac{2i}{{\mu_2}}e_{2,22}(D_3+D_4)
-\frac{1}{\sqrt{\mu_{2}}\chi_2}D_4,
\end{split}\end{equation}

\begin{equation}\begin{split}
& \mathcal{L}_1^2\mathcal{L}_2\phi_{j\alpha}|_{(0,0)} =
2i\chi_2\cdot 2\cdot
2i(\frac{\mu_{11}}{\mu_1}\ov{e_{1,11}}e_{1,j\alpha}
+\frac{\mu_{11}}{\mu_2}\ov{e_{2,11}}e_{2,j\alpha})\\
&\ \ \ + 4i\chi_1\cdot
2i(\frac{\mu_{12}}{\mu_1}\ov{e_{1,12}}e_{1,j\alpha}
+\frac{\mu_{12}}{\mu_2}\ov{e_{2,12}}e_{2,j\alpha})\\
&\ \ \ + 4i\chi_1\cdot 2i\chi_2\cdot
2\Big(-\frac{2i}{{\mu_1}}e_{1,j\alpha}f_1^{(I_1+2I_n)}
-\frac{2i}{{\mu_2}}e_{2,j\alpha}f_2^{(I_1+2I_n)}\Big)\\
&\ \ \ + (2i\chi_1)^2\cdot
2\Big(-\frac{2i}{{\mu_1}}e_{1,j\alpha}f_1^{(I_2+2I_n)}
-\frac{2i}{{\mu_2}}e_{2,j\alpha}f_2^{(I_2+2I_n)}\Big)\\
& =
-\frac{2i}{{\mu_1}}e_{1,j\alpha}(D_1+D_2)-\frac{2i}{{\mu_2}}e_{2,j\alpha}(D_3+D_4),
\end{split}\end{equation}
for $j=1, 2$, and
\begin{equation}\begin{split}
&\mathcal{L}_1^2\mathcal{L}_2\phi_{33}|_{(0,0)} = 2\cdot
\frac{2}{\mu_{12}}\big(\sqrt{\frac{\mu_1}{\mu_2}}\mu_{12}\ov{e_{2,12}}
-\sqrt{\frac{\mu_2}{\mu_1}}\mu_{11}\ov{e_{1,11}}\big)\\
&\ \ \ +2i\chi_2\cdot
2\Big(2i(\frac{\mu_{11}}{\mu_1}\ov{e_{1,11}}e_{1,33}
+\frac{\mu_{11}}{\mu_2}\ov{e_{2,11}}e_{2,33})-\frac{2}{{\mu_{12}}}
\sqrt{\frac{\mu_1}{\mu_2}}f_2^{(I_1+2I_n)}\Big)\\
&\ \ \ + 4i\chi_1\cdot
\Big(2i(\frac{\mu_{12}}{\mu_1}\ov{e_{1,12}}e_{1,33}
+\frac{\mu_{12}}{\mu_2}\ov{e_{2,12}}e_{2,33})-\frac{2}{\mu_{12}}(\sqrt{\frac{\mu_1}{\mu_2}}f_2^{(I_2+2I_n)}
-\sqrt{\frac{\mu_2}{\mu_1}}f_1^{(I_1+2I_n)})\Big)\\
&\ \ \ + 4i\chi_1\cdot 2i\chi_2\cdot
2\Big(-\frac{2i}{{\mu_1}}e_{1,33}f_1^{(I_1+2I_n)}-\frac{2i}{{\mu_2}}e_{2,33}f_2^{(I_1+2I_n)}\Big)\\
&\ \ \ + (2i\chi_1)^2\cdot
2\Big(-\frac{2i}{{\mu_1}}e_{1,33}f_1^{(I_2+2I_n)}-\frac{2i}{{\mu_2}}e_{2,33}f_2^{(I_2+2I_n)}\Big)\\
\end{split}\end{equation}
\begin{equation*}\begin{split}
=&\ \ \
-\frac{2i}{{\mu_1}}e_{1,33}(D_1+D_2)-\frac{2i}{{\mu_2}}e_{2,33}(D_3+D_4)
+\frac{i}{\mu_{12}\chi_1} \sqrt{\frac{\mu_1}{\mu_2}}
D_3-\frac{i}{\mu_{12}\chi_2} \sqrt{\frac{\mu_2}{\mu_1}}  D_2.
\end{split}\end{equation*}

\medspace

We write ${\cal B}_2=(Id-{\cal G}_1){\cal B}_1$ and ${\cal
A}_2=(Id-{\cal G}_1){\cal A}_1$ so that
\[
{\cal B}_2 \ov{\phi}(\chi, 0)^t = {\cal A}_2
\]
where
\[
{\cal G}_1:=
\begin{pmatrix}
0 & 0 & 0 & \frac{2\chi_1}{\chi_3} & 0 & 0 & 0 & 0 \\
0 & 0 & 0 & \frac{\chi_2}{\chi_3} & 0 & \frac{\chi_1}{\chi_3} & 0 & 0 \\
0 & 0 & 0 & 0 & 0 & \frac{2\chi_2}{\chi_3} & 0 & 0 \\
0 & 0 & 0 & 0 & 0 & 0 & 0 & 0 \\
0 & 0 & 0 & \frac{\chi_\alpha}{\chi_3} & 0 & 0 & 0 & 0 \\
0 & 0 & 0 & 0 & 0 & 0 & 0 & 0 \\
0 & 0 & 0 & 0 & 0 & \frac{\chi_\alpha}{\chi_3} & 0 & 0 \\
0 & 0 & 0 & -\frac{D_1+D_2}{\mu_1 \chi_3} & 0 & -\frac{D_3+D_4}{\mu_2 \chi_3} & 0 & 0 \\
\end{pmatrix}.
\]
By the construction of ${\cal B}_2$ and ${\cal A}_2$, we see that
(\ref{rel})(\ref{e1}) and (\ref{e2}) still hold. We further
calculate:
\begin{equation}\begin{split}
{\cal B}_{2,33} & ={\cal B}_{1,33}+\frac{B_{1,13}}{\mu_1\chi_3}(D_1+D_2)+\frac{B_{1,23}}{\mu_2\chi_3}(D_3+D_4)\\
& =
\Big(-\frac{D_1}{\sqrt{\mu_1}\chi_1},-\frac{D_3}{\mu_{12}\chi_1}-\frac{D_2}{\mu_{12}\chi_2},
-\frac{D_4}{\sqrt{\mu_2}\chi_2},\frac{D_1+D_2}{\sqrt{\mu_1}\chi_3},0,\cdots,0,\\
&\frac{D_3+D_4}{\sqrt{\mu_2}\chi_3},
0,\cdots,0,\frac{i}{\mu_{12}}\sqrt{\frac{\mu_1}{\mu_2}}\frac{D_3}{\chi_1}
-\frac{i}{\mu_{12}}\sqrt{\frac{\mu_2}{\mu_1}}\frac{D_2}{\chi_2}\Big).
\end{split}\end{equation}
and
\begin{equation*}\begin{split}
& {\cal A}_{2,33} =
-\chi_1(D_1+D_2)-\chi_2(D_3+D_4)+\frac{\mu_1\chi_1\chi_3}{\mu_1\chi_3}(D_1+D_2)
+\frac{\mu_2\chi_2\chi_3}{\mu_2\chi_3}(D_3+D_4) = 0.
\end{split}\end{equation*}

\medspace

We turn to ${\cal B}_{2,33} \ov\phi(\chi, 0)^t = {\cal A}_{2,33}$ to
have
\begin{equation}\begin{split}
&-\frac{D_1}{\sqrt{\mu_1}\chi_1}\ov{\phi}_{11}-\Big(\frac{D_3}{\mu_{12}\chi_1}+\frac{D_2}{\mu_{12}\chi_2}\Big)\ov{\phi}_{12}
-\frac{D_4}{\sqrt{\mu_2}\chi_2}\ov{\phi}_{22}\\
&+\frac{D_1+D_2}{\sqrt{\mu_1}\chi_3}\ov{\phi}_{13}+\frac{D_3+D_4}{\sqrt{\mu_2}\chi_3}
\ov{\phi}_{23}+\Big(\frac{i}{\mu_{12}}\sqrt{\frac{\mu_1}{\mu_2}}\frac{D_3}{\chi_1}
-\frac{i}{\mu_{12}}\sqrt{\frac{\mu_2}{\mu_1}}\frac{D_2}{\chi_2}\Big)\ov{\phi}_{33}=0.
\end{split}\end{equation}
Substituting (\ref{rel}) to this equation, we obtain

\begin{equation}\begin{split}
&-\frac{D_1}{\sqrt{\mu_1}\chi_3}\ov{\phi}_{13}-\Big(\frac{D_3}{\mu_{12}\chi_1}+\frac{D_2}{\mu_{12}\chi_2}\Big)\cdot
\Big(\frac{\mu_{13}\chi_2}{\mu_{12}\chi_3}\ov{\phi_{13}}+\frac{\mu_{23}\chi_1}{\mu_{12}\chi_3}\ov{\phi_{23}}\Big)
-\frac{D_4}{\sqrt{\mu_2}\chi_3}\ov{\phi}_{23}\\
&+\frac{D_1+D_2}{\sqrt{\mu_1}\chi_3}\ov{\phi}_{13}+\frac{D_3+D_4}{\sqrt{\mu_2}\chi_3}
\ov{\phi}_{23}+\Big(\frac{i}{\mu_{12}}\sqrt{\frac{\mu_1}{\mu_2}}\frac{D_3}{\chi_1}
-\frac{i}{\mu_{12}}\sqrt{\frac{\mu_2}{\mu_1}}\frac{D_2}{\chi_2}\Big)\ov{\phi}_{33}=0.
\end{split}\end{equation}
A quick simplification gives
\begin{equation}\begin{split}
&
\frac{\mu_2\chi_1D_2-\mu_1\chi_2D_3}{\sqrt{\mu_1}(\mu_1+\mu_2)\chi_1\chi_3}\ov{\phi}_{13}
-\frac{\mu_2\chi_1D_2-\mu_1\chi_2D_3}{\sqrt{\mu_2}(\mu_1+\mu_2)\chi_2\chi_3}\ov{\phi}_{23}
-\frac{i({\mu_2\chi_1D_2-\mu_1\chi_2D_3})}{\sqrt{\mu_1\mu_2(\mu_1+\mu_2)}\chi_1\chi_2}\ov{\phi}_{33}
=0.
\end{split}\end{equation}
Since $\phi^{(2I_1+I_2)}_{33}\neq 0$ in Case B, the polynomial
$\mu_2\chi_1D_2-\mu_1\chi_2D_3\neq 0$ because of (\ref{phi 1}). Thus
\begin{equation}\begin{split}
& \frac{1}{\sqrt{\mu_1}(\mu_1+\mu_2)\chi_1\chi_3}\ov{\phi}_{13}
-\frac{1}{\sqrt{\mu_2}(\mu_1+\mu_2)\chi_2\chi_3}\ov{\phi}_{23}
-\frac{i}{\sqrt{\mu_1\mu_2(\mu_1+\mu_2)}\chi_1\chi_2}\ov{\phi}_{33}
=0.
\end{split}\end{equation}
Hence we obtain the same formula (\ref{33 eq}). As in case A, we
show $deg(F)\le 3$. The proof for the case B$'$ is similar to the
case B.
\end{proof}

\section{Proof of Proposition \ref{propdeg} for Case C}

In this section, we will prove Proposition \ref{propdeg} for Case C.

\begin{proof}[Proof of Proposition \ref{propdeg} for Case C]
In Case C, we suppose  $\phi^{(I_1+I_2+I_j)}_{33}\neq 0$. Then
\[
\frac{{\cal L}_1 {\cal L}_2 {\cal L}_j  g}{2i}|_{(0, 0)} =
\sum^{n-1}_{l=1} {\cal L}_1 {\cal L}_2 {\cal L}_j f_l|_{(0, 0)}
\ov{f_l(\ov\chi, 0)} + \sum_{(s, t)\in {\cal S}_0} {\cal L}_1 {\cal
L}_2 {\cal L}_j \phi_{st}|_{(0, 0)} \ov{\phi_{st}(\ov\chi, 0)}
\]
\[
 + {\cal L}_1 {\cal L}_2 {\cal L}_j \phi_{33}|_{(0, 0)} \ov{\phi_{33}(\ov\chi, 0)}.
\]

Recall ${\cal L}_1 \mathcal{L}_2\mathcal{L}_j = \frac{\p^3}{\p z_1
z_2 \p z_j} + 2i\chi_1 \frac{\p^3}{\p z_2 z_j \p w} +
2i\chi_2\frac{\p^3}{\p z_1 \p z_j \p w} + 2i\chi_j\frac{\p^3}{\p z_1
\p z_2 \p w} - 4 \chi_2 \chi_j \frac{\p^3}{\p z_1 \p w^2} - 4 \chi_1
\chi_j \frac{\p^3}{\p z_2 \p w^2} - 4 \chi_1 \chi_2 \frac{\p^3}{\p
z_j \p w^2} - 8 i \chi_1\chi_2 \chi_j\frac{\p^3}{ \p w^3}$. Then
\begin{equation*}\begin{split}
& {\cal L}_1{\cal L}_2{\cal L}_j f_1|_{(0,0)}= 2i\chi_1
f_1^{(I_2+I_j+I_n)} + 2i\chi_2 f_1^{(I_1+I_j+I_n)}
+ 2i\chi_j f_1^{(I_1+I_2+I_n)} \\
& \ \ - 8 \chi_2 \chi_j f_1^{(I_1+2I_n)} - 8 \chi_1 \chi_j
f_1^{(I_2+2I_n)} = \mathtt{D}_1 + \mathtt{D}_2 + \mathtt{D}_3,
\end{split}
\end{equation*}

\begin{equation*}\begin{split}
& {\cal L}_1{\cal L}_2{\cal L}_j f_2|_{(0,0)}= 2i\chi_1
f_2^{(I_2+I_j+I_n)} + 2i\chi_2 f_2^{(I_1+I_j+I_n)}
+ 2i\chi_j f_2^{(I_1+I_2+I_n)} \\
& \ \ - 8 \chi_2 \chi_j f_2^{(I_1+2I_n)} - 8 \chi_1 \chi_j
f_2^{(I_2+2I_n)} =\mathtt{D}_4 + \mathtt{D}_5 + \mathtt{D}_6
\end{split}
\end{equation*}
where
\[
\begin{cases}
\mathtt{D}_1 = - 2i \chi_1 \sqrt{\mu_2} \ov{e_{1,2j}} - 4 \chi_1 \chi_j f_1^{(I_2+2I_n)},\\
\mathtt{D}_2 = -2i \chi_2 \sqrt{\mu_1} \ov{e_{1,1j}} - 4 \chi_2 \chi_j f_1^{(I_1+2I_n)},\\
\mathtt{D}_3 = - 2i \chi_j \sqrt{\mu_1+\mu_2} \ov{e_{1,12}} - 4
\chi_1 \chi_j f_1^{(I_2+2I_n)}
- 4\chi_2\chi_j f_1^{(I_1+2I_n)}, \\
\end{cases}
\]
\[
\begin{cases}
\mathtt{D}_4 = - 2i \chi_1 \sqrt{\mu_2} \ov{e_{2,2j}} - 4 \chi_1 \chi_j f_2^{(I_2+2I_n)},\\
\mathtt{D}_5 = -2i \chi_2 \sqrt{\mu_1} \ov{e_{2,1j}} - 4 \chi_2 \chi_j f_2^{(I_1+2I_n)} ,\\
\mathtt{D}_6 = - 2i \chi_j \sqrt{\mu_1+\mu_2} \ov{e_{2,12}} - 4
\chi_1 \chi_j f_2^{(I_2+2I_n)}
- 4\chi_2\chi_j f_2^{(I_1+2I_n)}.\\
\end{cases}
\]

As in (\ref{new form}), we have
\begin{equation}
\label{new formbbb} \mathtt{B}_1 \ov{\phi}(\chi, 0)^t=\mathtt{A}_1.
\end{equation}
where
\begin{equation}
\mathtt{B}_1:= \begin{pmatrix}
{\cal L}_j{\cal L}_k \phi_{hl} & {\cal L}_j{\cal L}_k \phi_{h\alpha} & {\cal L}_j{\cal L}_k \phi_{33} \\
{\cal L}_j{\cal L}_\beta \phi_{hl} & {\cal L}_j{\cal L}_\beta
\phi_{h\alpha} & {\cal L}_j{\cal L}_\beta
\phi_{33}\\
{\cal L}_1 {\cal L}_2 {\cal L}_j\phi_{hl} & {\cal L}^2_1 {\cal L}_2
\phi_{h\alpha} &
{\cal L}_1 {\cal L}_2 {\cal L}_j\phi_{33} \\
\end{pmatrix}_{1\leq j,k,h,l\leq 2, {3\leq \alpha,\beta\leq n-1}} \bigg|_{(0,0)}
=\begin{pmatrix}
B_{1, jk}\\
B_{1, j \beta}\\
\mathtt{B}_{1, 33}
\end{pmatrix}.
\end{equation}
and $\mathtt{A}_1=(A_{1,11}, A_{1,12}, A_{1,22}, A_{1,1\alpha},
A_{1,2\alpha}, \mathtt{A}_{1,33})^t$ where

\noindent $\mathtt{A}_{1,33}=-{\cal L}_1 {\cal L}_2 {\cal L}_j
f_1|_{(0,0)} \chi_1 - {\cal L}_1 {\cal L}_2 {\cal L}_j f_2|_{(0,0)}$
$\chi_2 = -(\mathtt{D}_1+\mathtt{D}_2+\mathtt{D}_3) \chi_1  -
(\mathtt{D}_4+\mathtt{D}_5+\mathtt{D}_6) \chi_2$, and

\noindent $\mathtt{B}_{1,33}=({\cal L}_1 {\cal L}_2  {\cal L}_j
\phi_{hl}|_{(0,0)},\ {\cal L}_1 {\cal L}_2 {\cal L}_j
\phi_{h\alpha}|_{(0,0)},\  {\cal L}_1 {\cal L}_2 {\cal L}_j
\phi_{33}|_{(0,0)})$. Recall ${\cal L}_1 \mathcal{L}_2 {\cal L}_j
\phi|_{(0,0)} = \phi^{(I_1+I_2+I_j)}$ $+ 2i\chi_1
\phi^{(I_2+I_j+I_n)} + 2i\chi_2 \phi^{(I_1+I_j+I_n)} + 2i\chi_j
\phi^{(I_1+I_2+I_n)} - 8 \chi_2 \chi_j \phi^{(I_1+2I_n)} - 8 \chi_1
\chi_j \phi^{(I_2+2I_n)}$. Then
\begin{equation}\begin{split}
& {\cal L}_1 \mathcal{L}_2 {\cal L}_j \phi_{11}|_{(0,0)} =
\frac{2i\sqrt{\mu_2}}{\sqrt{\mu_1}} \ov{e_{1,2j}} + 2i\chi_1
\bigg(\frac{2i}{\mu_1} \mu_{2j} \ov{e_{1,2j}} e_{1,11}
+ \frac{2i}{\mu_2} \mu_{2j}\ov{e_{2,2j}}e_{2,11}\bigg) \\
& + 2i\chi_2 \bigg(\frac{2i}{\mu_1} \mu_{2j} \ov{e_{1,1j}} e_{1,11}
+ \frac{2i}{\mu_2} \mu_{1j}\ov{e_{2,1j}}e_{2,11}\bigg) \\
& + 2i\chi_j \bigg(\frac{2i}{\mu_1} \mu_{12} \ov{e_{1,12}} e_{1,11}
+ \frac{2i}{\mu_2} \mu_{12}\ov{e_{2,12}}e_{2,11}
- \frac{2i}{\sqrt{\mu_1}} f_1^{(I_2+2I_n)}\bigg) \\
& - 8 \chi_2 \chi_j \bigg(-\frac{2i}{\mu_1} f_1^{(I_1+2I_n)}
e_{1,11}
- \frac{2i}{\mu_2} f_2^{(I_1+2I_n)} e_{2,11}\bigg)\\
& - 8 \chi_1 \chi_j \bigg(-\frac{2i}{\mu_1} f_1^{(I_2+2I_n)}
e_{1,11}
- \frac{2i}{\mu_2} f_2^{(I_2+2I_n)} e_{2,11}\bigg)\\
& = - \frac{2i}{\mu_1} e_{1,11}
(\mathtt{D}_1+\mathtt{D}_2+\mathtt{D}_3) - \frac{2i}{\mu_2} e_{2,11}
(\mathtt{D}_4+\mathtt{D}_5+\mathtt{D}_6) -
\frac{1}{\sqrt{\mu_1}\chi_1} \mathtt{D}_1,
\end{split}\end{equation}

\begin{equation*}\begin{split}
& {\cal L}_1 \mathcal{L}_2 {\cal L}_j \phi_{12}|_{(0,0)} = \frac{2 i
}{\sqrt{\mu_1+\mu_2}}
\Big(\sqrt{\mu_2}\ov{e_{2,2j}}+\sqrt{\mu_1}\ov{e_{1,1j}}\Big)
 + 2i\chi_1  \bigg(\frac{2i}{\mu_1} \mu_{2j} \ov{e_{1,2j}} e_{1,12}\\
\end{split}\end{equation*}
\begin{equation}\begin{split}
&\ \ \ + \frac{2i}{\mu_2} \mu_{2j} \ov{e_{2,2j}} e_{2,12}\bigg) +
2i\chi_2  \bigg(\frac{2i}{\mu_1} \mu_{1j} \ov{e_{1,1j}} e_{1,12}
+ \frac{2i}{\mu_2} \mu_{1j} \ov{e_{2,1j}} e_{2,12}\bigg)\\
& + 2i\chi_j \bigg(\frac{2i}{\mu_1} \mu_{12} \ov{e_{1,12}} e_{1,12}
+ \frac{2i}{\mu_2} \mu_{12} \ov{e_{2,12}} e_{2,12}
- \frac{2i}{\sqrt{\mu_1+\mu_2}}f_2^{(I_2+2I_n)} - \frac{2i}{\sqrt{\mu_1+\mu_2}}f_1^{(I_1+2I_n)}\bigg)\\
& - 8 \chi_2 \chi_j \bigg(-\frac{2i}{\mu_1} f_1^{(I_1+2I_n)}
e_{1,12}
-\frac{2i}{\mu_2} f_2^{(I_1+2I_n)} e_{2,12}\bigg)\\
& - 8 \chi_1 \chi_j \bigg(-\frac{2i}{\mu_1} f_1^{(I_2+2I_n)}
e_{1,12}
-\frac{2i}{\mu_2} f_2^{(I_2+2I_n)} e_{2,12}\bigg)\\
& = - \frac{2i}{\mu_1} e_{1,12}
(\mathtt{D}_1+\mathtt{D}_2+\mathtt{D}_3)
 - \frac{2i}{\mu_2} e_{2,12} (\mathtt{D}_4+\mathtt{D}_5+\mathtt{D}_6)
 - \frac{1}{\sqrt{\mu_1+\mu_2}\chi_2} \mathtt{D}_2
 - \frac{1}{\sqrt{\mu_1+\mu_2} \chi_1} \mathtt{D}_4,
\end{split}\end{equation}

\begin{equation}\begin{split}
& {\cal L}_1 \mathcal{L}_2 {\cal L}_j \phi_{22}|_{(0,0)} =
\frac{2i\sqrt{\mu_1}}{\sqrt{\mu_2}} \ov{e_{2,1j}} + 2i\chi_1
\bigg(\frac{2i}{\mu_1} \mu_{2j} \ov{e_{1,2j}} e_{1,22}
+ \frac{2i}{\mu_2} \mu_{2j}\ov{e_{2,2j}}e_{2,22}\bigg) \\
& + 2i\chi_2 \bigg(\frac{2i}{\mu_1} \mu_{2j} \ov{e_{1,1j}} e_{1,22}
+ \frac{2i}{\mu_2} \mu_{1j}\ov{e_{2,1j}}e_{2,22}\bigg) \\
& + 2i\chi_j \bigg(\frac{2i}{\mu_1} \mu_{12} \ov{e_{1,12}} e_{1,22}
+ \frac{2i}{\mu_2} \mu_{12}\ov{e_{2,12}}e_{2,22}
- \frac{2i}{\sqrt{\mu_2}} f_2^{(I_1+2I_n)}\bigg) \\
&
- 8 \chi_2 \chi_j \bigg(-\frac{2i}{\mu_1} f_1^{(I_1+2I_n)} e_{1,22} - \frac{2i}{\mu_2} f_2^{(I_1+2I_n)} e_{2,22}\bigg)\\
&
- 8 \chi_1 \chi_j \bigg(-\frac{2i}{\mu_1} f_1^{(I_2+2I_n)} e_{1,22} - \frac{2i}{\mu_2} f_2^{(I_2+2I_n)} e_{2,22}\bigg)\\
& = - \frac{2i}{\mu_1} e_{1,12}
(\mathtt{D}_1+\mathtt{D}_2+\mathtt{D}_3) - \frac{2i}{\mu_2} e_{2,12}
(\mathtt{D}_4+\mathtt{D}_5+\mathtt{D}_6) -
\frac{1}{\sqrt{\mu_2}\chi_2} \mathtt{D}_5,
\end{split}\end{equation}

\begin{equation}\begin{split}
& {\cal L}_1 \mathcal{L}_2 {\cal L}_j \phi_{1k}|_{(0,0)} =
\frac{2i\sqrt{\mu_1+\mu_2}}{\sqrt{\mu_1}} \delta_{jk}\ov{e_{1,12}} \\
& + 2i\chi_1 \bigg(\frac{2i}{\mu_1} \mu_{2j} \ov{e_{1,2j}} e_{1,1k}
+ \frac{2i}{\mu_2} \mu_{2j} \ov{e_{2,2j}}e_{2,1k}
-\frac{2i}{\sqrt{\mu_1}}\delta_{jk} f_1^{(I_2+2I_n)}\bigg)\\
& + 2i\chi_2 \bigg(\frac{2i}{\mu_1} \mu_{1j} \ov{e_{1,1j}} e_{1,1k}
+ \frac{2i}{\mu_2} \mu_{1j} \ov{e_{2,1j}}e_{2,1k}
-\frac{2i}{\sqrt{\mu_1}}\delta_{jk} f_1^{(I_1+2I_n)}\bigg)\\
& + 2i\chi_j \bigg(\frac{2i}{\mu_1} \mu_{12} \ov{e_{1,12}} e_{1,1k}
+ \frac{2i}{\mu_2} \mu_{12} \ov{e_{2,12}}e_{2,1k}\bigg)\\
& - 8 \chi_2 \chi_j\bigg(-\frac{2i}{\mu_1} f_1^{(I_1+2I_n)} e_{1,1k}
- \frac{2i}{\mu_2} f_2^{(I_1+2I_n)} e_{2,1k}\bigg)\\
\end{split}\end{equation}

\begin{equation*}\begin{split}
&- 8 \chi_1 \chi_j \bigg(-\frac{2i}{\mu_1} f_1^{(I_2+2I_n)} e_{1,1k}
- \frac{2i}{\mu_2} f_2^{(I_2+2I_n)} e_{2,1k}\bigg)\\
& = - \frac{2i}{\mu_1} e_{1,1k}
(\mathtt{D}_1+\mathtt{D}_2+\mathtt{D}_3) - \frac{2i}{\mu_2} e_{2,1k}
(\mathtt{D}_4+\mathtt{D}_5+\mathtt{D}_6) -
\frac{1}{\sqrt{\mu_1}\chi_j} \delta_{jk}\mathtt{D}_3,
\end{split}\end{equation*}

\begin{equation}\begin{split}
& {\cal L}_1 \mathcal{L}_2 {\cal L}_j \phi_{2k}|_{(0,0)} =
\frac{2i\sqrt{\mu_1+\mu_2}}{\sqrt{\mu_2}}\delta_{jk} \ov{e_{2,12}} \\
& + 2i\chi_1 \bigg(\frac{2i}{\mu_1} \mu_{2j} \ov{e_{1,2j}} e_{1,2k}
+ \frac{2i}{\mu_2} \mu_{2j} \ov{e_{2,2j}}e_{2,2k}
-\frac{2i}{\sqrt{\mu_2}}\delta_{jk} f_2^{(I_2+2I_n)}\bigg)\\
& + 2i\chi_2 \bigg(\frac{2i}{\mu_1} \mu_{1j} \ov{e_{1,1j}} e_{1,2k}
+ \frac{2i}{\mu_2} \mu_{1j} \ov{e_{2,1j}}e_{2,2k}
-\frac{2i}{\sqrt{\mu_2}} \delta_{jk}f_2^{(I_1+2I_n)}\bigg)\\
& + 2i\chi_j \bigg(\frac{2i}{\mu_1} \mu_{12} \ov{e_{1,12}} e_{1,2k}
+ \frac{2i}{\mu_2} \mu_{12} \ov{e_{2,12}}e_{2,2k}\bigg)\\
& - 8 \chi_2 \chi_j\bigg(-\frac{2i}{\mu_1} f_1^{(I_1+2I_n)} e_{1,2k}
- \frac{2i}{\mu_2} f_2^{(I_1+2I_n)} e_{2,2k}\bigg)\\
&- 8 \chi_1 \chi_j \bigg(-\frac{2i}{\mu_1} f_1^{(I_2+2I_n)} e_{1,2k}
- \frac{2i}{\mu_2} f_2^{(I_2+2I_n)} e_{2,2k}\bigg)\\
& = - \frac{2i}{\mu_1} e_{1,2k}
(\mathtt{D}_1+\mathtt{D}_2+\mathtt{D}_3) - \frac{2i}{\mu_2} e_{2,2k}
(\mathtt{D}_4+\mathtt{D}_5+\mathtt{D}_6) -
\frac{1}{\sqrt{\mu_2}\chi_j} \delta_{jk}\mathtt{D}_6,
\end{split}\end{equation}

\begin{equation}\begin{split}
& {\cal L}_1 \mathcal{L}_2 {\cal L}_j \phi_{33}|_{(0,0)}
= \frac{2}{\sqrt{\mu_1+\mu_2}} \Big(\sqrt{\mu_1}\ov{e_{2,2j}}-\sqrt{\mu_2}\ov{e_{1,1j}}\Big) \\
& + 2i\chi_1 \bigg(\frac{2i}{\mu_1} \mu_{2j} \ov{e_{1,2j}} e_{1,33}
+\frac{2i}{\mu_2} \mu_{2j} \ov{e_{2,2j}} e_{2,33} \bigg)\\
& + 2i\chi_2 \bigg(\frac{2i}{\mu_1} \mu_{1j} \ov{e_{1,1j}} e_{1,33}
+\frac{2i}{\mu_2} \mu_{1j} \ov{e_{2,1j}} e_{2,33} \bigg)\\
& + 2i\chi_j \bigg(\frac{2i}{\mu_1} \mu_{12} \ov{e_{1,12}} e_{1,33}
+\frac{2i}{\mu_2} \mu_{12} \ov{e_{2,12}} e_{2,33} \\
&\ \ \ \ \ - \frac{2}{\sqrt{\mu_1 \mu_2(\mu_1+\mu_2)}}
\Big(\mu_1f_2^{(I_2+2I_n)}
-\mu_2 f_1^{(I_1+2I_n)}\Big)\bigg)\\
& - 8 \chi_2 \chi_j \bigg(-\frac{2i}{\mu_1} f_1^{(I_1+2I_n)}
e_{1,33}
- \frac{2i}{\mu_2} f_2^{(I_1+2I_n)} e_{2,33}\bigg)\\
\end{split}\end{equation}
\begin{equation*}\begin{split}
& - 8 \chi_1 \chi_j \bigg(-\frac{2i}{\mu_1} f_1^{(I_2+2I_n)}
e_{1,33}
- \frac{2i}{\mu_2} f_2^{(I_2+2I_n)} e_{2,33}\bigg)\\
& = - \frac{2i}{\mu_1} e_{1,33}
(\mathtt{D}_1+\mathtt{D}_2+\mathtt{D}_3)
- \frac{2i}{\mu_2} e_{2,33} (\mathtt{D}_4+\mathtt{D}_5+\mathtt{D}_6)\\
& + \frac{i \sqrt{\mu_1}}{\sqrt{\mu_2(\mu_1+\mu_2)}\chi_1}
\mathtt{D}_4 - \frac{i
\sqrt{\mu_2}}{\sqrt{\mu_1(\mu_1+\mu_2)}\chi_2} \mathtt{D}_2.
\end{split}
\end{equation*}

We write $\mathtt{B}_2=(Id - \mathtt{G}_1)\mathtt{B}_1$ and
$\mathtt{A}_2=(Id-\mathtt{G}_1) \mathtt{A}_1$ so that
\[
\mathtt{B}_2 \ov\phi(\chi, 0)^t = \mathtt{A}_2
\]
where
\[
\mathtt{G}_1:=
\begin{pmatrix}
0 & 0 & 0 & \frac{2\chi_1}{\chi_3} & 0 & 0 & 0 & 0 \\
0 & 0 & 0 & \frac{\chi_2}{\chi_3} & 0 & \frac{\chi_1}{\chi_3} & 0 & 0 \\
0 & 0 & 0 & 0 & 0 & \frac{2\chi_2}{\chi_3} & 0 & 0 \\
0 & 0 & 0 & 0 & 0 & 0 & 0 & 0 \\
0 & 0 & 0 & \frac{\chi_\alpha}{\chi_3} & 0 & 0 & 0 & 0 \\
0 & 0 & 0 & 0 & 0 & 0 & 0 & 0 \\
0 & 0 & 0 & 0 & 0 & \frac{\chi_\alpha}{\chi_3} & 0 & 0 \\
0 & 0 & 0 & -\frac{\mathtt{D}_1+\mathtt{D}_2+\mathtt{D}_3}{\mu_1
\chi_3} & 0
& -\frac{\mathtt{D}_4+\mathtt{D}_5 + \mathtt{D}_6}{\mu_2 \chi_3} & 0 & 0 \\
\end{pmatrix}.
\]
By the construction of $\mathtt{B}_2$ and $\mathtt{A}_2$, we see
that (\ref{rel})(\ref{e1}) and (\ref{e2}) still hold. We further
calculate:
\begin{equation}\begin{split}
\mathtt{B}_{2,33} & =
\mathtt{B}_{1,33}+\frac{B_{1,13}}{\mu_1\chi_3}(\mathtt{D}_1+\mathtt{D}_2+\mathtt{D}_3)
+\frac{B_{1,23}}{\mu_2\chi_3}(\mathtt{D}_4+\mathtt{D}_5 +\mathtt{D}_6)\\
& = \Big(-\frac{\mathtt{D}_1}{\sqrt{\mu_1}\chi_1},\
-\frac{\mathtt{D}_4}{\mu_{12}\chi_1} -
\frac{\mathtt{D}_2}{\mu_{12}\chi_2},\
-\frac{\mathtt{D}_5}{\sqrt{\mu_2}\chi_2}, \
\frac{\mathtt{D}_1+\mathtt{D}_2}{\sqrt{\mu_1}\chi_3}, 0,\cdots,0,\\
&\frac{\mathtt{D}_4+\mathtt{D}_5}{\sqrt{\mu_2}\chi_3},0, \cdots,0,
\frac{i}{\mu_{12}}\sqrt{\frac{\mu_1}{\mu_2}}\frac{\mathtt{D}_4}{\chi_1}
-\frac{i}{\mu_{12}}\sqrt{\frac{\mu_2}{\mu_1}}\frac{\mathtt{D}_2}{\chi_2}\Big).
\end{split}\end{equation}
and
\begin{equation}\begin{split}
& \mathtt{A}_{2,33}
= -\chi_1(\mathtt{D}_1+\mathtt{D}_2+\mathtt{D}_3)-\chi_2(\mathtt{D}_4+\mathtt{D}_5+\mathtt{D}_6) \\
&\ \ +
\frac{\mu_1\chi_1\chi_3}{\mu_1\chi_3}(\mathtt{D}_1+\mathtt{D}_2+\mathtt{D}_3)
+\frac{\mu_2\chi_2\chi_3}{\mu_2\chi_3}(\mathtt{D}_4+\mathtt{D}_5+\mathtt{D}_6)
= 0.
\end{split}\end{equation}

\medspace

We turn to $\mathtt{B}_{2,33} \ov\phi(\chi, 0)^t =
\mathtt{A}_{2,33}$ to have
\begin{equation}\begin{split}
&-\frac{\mathtt{D}_1}{\sqrt{\mu_1}\chi_1}\ov{\phi}_{11}
-\Big(\frac{\mathtt{D}_4}{\mu_{12}\chi_1}+\frac{\mathtt{D}_2}{\mu_{12}\chi_2}\Big)\ov{\phi}_{12}
-\frac{\mathtt{D}_5}{\sqrt{\mu_2}\chi_2}\ov{\phi}_{22}\\
&+\frac{\mathtt{D}_1+\mathtt{D}_2}{\sqrt{\mu_1}\chi_3}\ov{\phi}_{13}
+\frac{\mathtt{D}_4+\mathtt{D}_5}{\sqrt{\mu_2}\chi_3}\ov{\phi}_{23}
+\Big(\frac{i}{\mu_{12}}\sqrt{\frac{\mu_1}{\mu_2}}\frac{\mathtt{D}_4}{\chi_1}
-\frac{i}{\mu_{12}}\sqrt{\frac{\mu_2}{\mu_1}}\frac{\mathtt{D}_2}{\chi_2}\Big)\ov{\phi}_{33}=0.
\end{split}\end{equation}
Substituting (\ref{rel}) to this equation, we obtain
\begin{equation}\begin{split}
&-\frac{\mathtt{D}_1}{\sqrt{\mu_1}\chi_3}\ov{\phi}_{13}
-\Big(\frac{\mathtt{D}_4}{\mu_{12}\chi_1}+\frac{\mathtt{D}_2}{\mu_{12}\chi_2}\Big)\cdot
\Big(\frac{\mu_{13}\chi_2}{\mu_{12}\chi_3}\ov{\phi_{13}}
+\frac{\mu_{23}\chi_1}{\mu_{12}\chi_3}\ov{\phi_{23}}\Big)
-\frac{\mathtt{D}_5}{\sqrt{\mu_2}\chi_3}\ov{\phi}_{23}\\
&+\frac{\mathtt{D}_1+\mathtt{D}_2}{\sqrt{\mu_1}\chi_3}\ov{\phi}_{13}
+\frac{\mathtt{D}_4+\mathtt{D}_5}{\sqrt{\mu_2}\chi_3}\ov{\phi}_{23}
+\Big(\frac{i}{\mu_{12}}\sqrt{\frac{\mu_1}{\mu_2}}\frac{\mathtt{D}_4}{\chi_1}
-\frac{i}{\mu_{12}}\sqrt{\frac{\mu_2}{\mu_1}}\frac{\mathtt{D}_2}{\chi_2}\Big)\ov{\phi}_{33}=0.
\end{split}\end{equation}
A quick simplification gives
\begin{equation}\begin{split}
&-\frac{\mu_1\chi_2 \mathtt{D}_4-\mu_2\chi_1
\mathtt{D}_2}{\sqrt{\mu_1}(\mu_1+\mu_2)\chi_1\chi_3} \ov{\phi}_{13}
+\frac{\mu_1\chi_2 \mathtt{D}_4-\mu_2\chi_1
\mathtt{D}_2}{\sqrt{\mu_2}(\mu_1+\mu_2)\chi_2\chi_3} \ov{\phi}_{23}
+\frac{i({\mu_1\chi_2 \mathtt{D}_4-\mu_2\chi_1
\mathtt{D}_2})}{\sqrt{\mu_1\mu_2(\mu_1+\mu_2)}
    \chi_1\chi_2}\ov{\phi}_{33}
=0.
\end{split}\end{equation}
Since $\phi_{33}^{(I_1+I_2+I_j)}\not=0$ in Case C, the polynomial
$\mu_2\chi_1 \mathtt{D}_2-\mu_1\chi_2 \mathtt{D}_4\neq 0$ because of
(\ref{phi 1}). Thus
\begin{equation}\begin{split}
&-\frac{1}{\sqrt{\mu_1}(\mu_1+\mu_2)\chi_1\chi_3}\ov{\phi}_{13}
+\frac{1}{\sqrt{\mu_2}(\mu_1+\mu_2)\chi_2\chi_3}\ov{\phi}_{23}
+\frac{i}{\sqrt{\mu_1\mu_2(\mu_1+\mu_2)}\chi_1\chi_2}\ov{\phi}_{33}
=0.
\end{split}\end{equation}
Hence we obtain the same formula (\ref{33 eq}). As in Case A, we can
further get $deg(F)\le 3$.
\end{proof}

\medspace

\section{Proof of Proposition \ref{propdeg} for Case D}

In this section, we will prove Proposition \ref{propdeg} for Case D.

\begin{proof}[Proof of Proposition \ref{propdeg} for Case D] In this case, we suppose  $\phi^{(2I_1+I_j)}_{33}\neq 0$. Then
\[
\frac{\mathcal{L}_1^2\mathcal{L}_j  g}{2i}|_{(0, 0)} =
\sum^{n-1}_{l=1} \mathcal{L}_1^2\mathcal{L}_j f_l|_{(0, 0)}
\ov{f_l(\ov\chi, 0)} + \sum_{(s, t)\in {\cal S}_0}
\mathcal{L}_1^2\mathcal{L}_j \phi_{st}|_{(0, 0)}
\ov{\phi_{st}(\ov\chi, 0)} + \mathcal{L}_1^2\mathcal{L}_j
\phi_{33}|_{(0, 0)} \ov{\phi_{33}(\ov\chi,
    0)}.
\]
Write
\begin{equation}\begin{split}
\mathfrak{D}_1 = & - 4i\chi_1 \mu_{1j}\ov{e_{1,1j}} - 8 \chi_1
\chi_j  f_1^{(I_1+2I_n)},\ \ \mathfrak{D}_2 =  - 4i\chi_j
\mu_{11}\ov{e_{1,11}} - 8 \chi_1 \chi_j
f_1^{(I_1+2I_n)},\\
\mathfrak{D}_3 = & - 4i\chi_1  \mu_{1j} \ov{e_{2,1j}} - 8 \chi_1
\chi_j f_2^{(I_1+2I_n)},\ \ \mathfrak{D}_4 = -4i\chi_j
\mu_{11}\ov{e_{2,11}} - 8 \chi_1 \chi_j
 f_2^{(I_1+2I_n)}.
\end{split}\end{equation}
Here we used the fact that $f_1^{(I_j+2I_n)}=f_2^{(I_j+2I_n)}=0$ due
to Theorem 2.1. A direct computation shows that
$\mathcal{L}_1^2\mathcal{L}_j = \frac{\p^3}{\p z_1^2 \p z_j} +
2i\chi_j\frac{\p^3}{\p z_1^2 \p w} + 4i\chi_1\frac{\p^3}{\p z_1\p
z_j \p w} - 8 \chi_1 \chi_j\frac{\p^3}{\p z_1 \p w^2} - 4
\chi_1^2\frac{\p^3}{\p z_j \p w^2} - 8 i \chi_1^2 \chi_j\frac{\p^3}{
\p w^3}$. Thus
\begin{equation}
\begin{split}
& \mathcal{L}_1^2\mathcal{L}_j f_1|_{(0,0)} = 2i\chi_j\cdot 2
f_1^{(2I_1+I_n)} + 4i\chi_1 f_1^{(I_1+I_j+I_n)} - 8 \chi_1 \chi_j
\cdot 2 f_1^{(I_1+2I_n)}  = \mathfrak{D}_1 + \mathfrak{D}_2.
\end{split}
\end{equation}

\begin{equation}
\begin{split}
& \mathcal{L}_1^2\mathcal{L}_j f_2|_{(0,0)} = 2i\chi_j\cdot 2
f_2^{(2I_1+I_n)} + 4i\chi_1 f_2^{(I_1+I_j+I_n)}
 - 8 \chi_1 \chi_j f_2^{(I_1+2I_n)}  = \mathfrak{D}_3 + \mathfrak{D}_4.
\end{split}
\end{equation}

\medspace

As in (\ref{new form}), we have
\begin{equation}
\label{new formbb} \mathfrak{B}_1 \ov{\phi}(\chi,
0)^t=\mathfrak{A}_1.
\end{equation}
where
\begin{equation}
\mathfrak{B}_1:= \begin{pmatrix}
{\cal L}_j{\cal L}_k \phi_{hl} & {\cal L}_j{\cal L}_k \phi_{h\alpha} & {\cal L}_j{\cal L}_k \phi_{33} \\
{\cal L}_j{\cal L}_\beta \phi_{hl} & {\cal L}_j{\cal L}_\beta
\phi_{h\alpha} & {\cal L}_j{\cal L}_\beta
\phi_{33}\\
{\cal L}^2_1 {\cal L}_j\phi_{hl} & {\cal L}^2_1 {\cal L}_j
\phi_{h\alpha} &
{\cal L}^2_1 {\cal L}_j \phi_{33} \\
\end{pmatrix}_{1\leq j,h,l\leq 2, {3\leq \alpha,\beta\leq n-1}} \bigg|_{(0,0)}
=\begin{pmatrix}
B_{1, jk}\\
B_{1, j \beta}\\
\mathfrak{B}_{1, 33}
\end{pmatrix}.
\end{equation}
and $\mathfrak{A}_1=(A_{1,11}, A_{1,12}, A_{1,22}, A_{1,1\alpha},
A_{1,2\alpha}, \mathfrak{A}_{1,33})^t$ where
$\mathfrak{A}_{1,33}=-{\cal L}_1^2 {\cal L}_j f_1|_{(0,0)} \chi_1 -
{\cal L}_1^2 {\cal L}_j f_2|_{(0,0)}$ $\chi_2 =
-(\mathfrak{D}_1+\mathfrak{D}_2) \chi_1 - (\mathfrak{D}_3 +
\mathfrak{D}_4) \chi_2$, and $\mathfrak{B}_{1,33}=({\cal L}_1^2{\cal
L}_j  \phi_{hl}|_{(0,0)},\ {\cal L}_1^2{\cal L}_j
\phi_{h\alpha}|_{(0,0)},\ {\cal L}_1^2{\cal L}_j
\phi_{33}|_{(0,0)})$.

\medspace

We also have $\mathcal{L}_1^2\mathcal{L}_j\phi|_{(0,0)} =
2\phi^{(2I_1+I_j)}+2i\chi_j\cdot
2\phi^{(2I_1+I_n)}+4i\chi_1\phi^{(I_1+I_j+I_n)} - 8\chi_1
\chi_j\cdot 2\phi^{(I_1+2I_n)} - 4 \chi_1^2 \cdot
2\phi^{(I_j+2I_n)}$ so that

\begin{equation*}\begin{split}
& \mathcal{L}_1^2\mathcal{L}_j\phi_{11}|_{(0,0)} = 2\cdot 2i
\ov{e_{1,1j}} + 4i\chi_j \bigg(2i (\frac{\mu_{11}}{\mu_1}
\ov{e_{1,11}} e_{1,11}
+ \frac{\mu_{11}}{\mu_2} \ov{e_{2,11}} e_{2,11}) - \frac{2i}{\mu_{11}} f_1^{(I_1+2I_n)}\bigg)\\
& +4i\chi_1 \bigg(2i(\frac{\mu_{1j}}{\mu_1} \ov{e_{1,1j}}e_{1,11} +
\frac{\mu_{1j}}{\mu_2} \ov{e_{2,1j}}e_{2,11})\bigg)
 - 16 \chi_1 \chi_j\bigg(-\frac{2i}{\mu_1} e_{1,11} f_1^{(I_1+2I_n)}
- \frac{2i}{\mu_2} e_{2,11} f_2^{(I_1+2I_n)}\bigg)\\
& = -\frac{2i}{\mu_1}e_{1,11}(\mathfrak{D}_1 + \mathfrak{D}_2) -
\frac{2i}{\mu_2}e_{2,11}(\mathfrak{D}_3 + \mathfrak{D}_4) -
\frac{1}{\mu_{1j}\chi_1} \mathfrak{D}_1,
\end{split}\end{equation*}

\begin{equation*}\begin{split}
& \mathcal{L}_1^2\mathcal{L}_j\phi_{12}|_{(0,0)} =  2 \frac{2i
\sqrt{\mu_1}}{\sqrt{\mu_1+\mu_2}} \ov{e_{2,1j}} +4i\chi_j
\bigg(2i(\frac{\mu_{11}}{\mu_1} \ov{e_{1,11}} e_{1,12} +
\frac{\mu_{11}}{\mu_2}
\ov{e_{2,11}} e_{2,12}) -\frac{2i}{\mu_{12}} f_2^{(I_1+2I_n)} \bigg) \\
& + 4i\chi_1 \bigg(2i(\frac{\mu_{1j}}{\mu_1} \ov{e_{1,1j}} e_{1,12}
+ \frac{\mu_{1j}}{\mu_2} \ov{e_{2,1j}} e_{2,12}) \bigg)
- 16\chi_1 \chi_j \bigg(-\frac{2i}{\mu_1} f_1^{(I_1+2I_n)} e_{1,12} - \frac{2i}{\mu_2} f_2^{(I_1+2I_n)} e_{2,12}\bigg)\\
& = -\frac{2i}{{\mu_1}}e_{1,12}(\mathfrak{D}_1 + \mathfrak{D}_2)
-\frac{2i}{\mu_2}e_{2,12}(\mathfrak{D}_3 + \mathfrak{D}_4) -
\frac{1}{\mu_{12} \chi_1} \mathfrak{D}_3,
\end{split}\end{equation*}

\begin{equation*}
\begin{split}
& \mathcal{L}_1^2\mathcal{L}_j\phi_{22}|_{(0,0)} = 4i\chi_j
\bigg(2i(\frac{1}{\mu_1} \mu_{11} \ov{e_{1,11}}e_{1,22}
+ \frac{1}{\mu_2} \mu_{11} \ov{e_{2,11}}e_{2,22})\bigg) \\
& + 4i\chi_1\bigg(2i(\frac{1}{\mu_1} \mu_{1j} \ov{e_{1,1j}}e_{1,22}
+ \frac{1}{\mu_2} \mu_{1j} \ov{e_{2,1j}}e_{2,22})\bigg)\\
& - 16\chi_1 \chi_j \bigg(-\frac{2i}{\mu_1} f_1^{(I_1+2I_n)}
e_{1,22}
- \frac{2i}{\mu_2} f_2^{(I_1+2I_n)} e_{2,22}\bigg) \\
\end{split}
\end{equation*}
\begin{equation*}
\begin{split}
& = -\frac{2i}{\mu_1}e_{1,22}(\mathfrak{D}_1 + \mathfrak{D}_2) -
\frac{2i}{\mu_2}e_{2,22}(\mathfrak{D}_3 + \mathfrak{D}_4),
\end{split}
\end{equation*}

\begin{equation*}
\begin{split}
& \mathcal{L}_1^2\mathcal{L}_j\phi_{1k}|_{(0,0)} = 2 \cdot 2i
\delta_{jk}\ov{e_{1,11}} + 4i \chi_j \bigg(2i(\frac{1}{\mu_1}
\mu_{11} \ov{e_{1,11}}e_{1,1k}
+ \frac{1}{\mu_2} \mu_{11} \ov{e_{2,11}} e_{2,1k}) \bigg) \\
& + 4i\chi_1 \bigg(2i(\frac{1}{\mu_1} \mu_{1j} \ov{e_{1,1j}}e_{1,1k}
+ \frac{1}{\mu_2} \mu_{1j} \ov{e_{2,1j}} e_{2,1k})
- \frac{2i}{\sqrt{\mu_1}}\delta_{jk} f_1^{(I_1+2I_n)}\bigg)\\
& - 16 \chi_1 \chi_j \bigg(-\frac{2i}{\mu_1} f_1^{(I_1+2I_n)}
e_{1,1k}
- \frac{2i}{\mu_2} f_2^{(I_1+2I_n)} e_{2,1k}\bigg) \\
& = - \frac{2i}{\mu_1}e_{1,1k} (\mathfrak{D}_1 + \mathfrak{D}_2) -
\frac{2i}{\mu_2} e_{2,1k}(\mathfrak{D}_3 + \mathfrak{D}_4) -
\frac{1}{\sqrt{\mu_1} \chi_j} \delta_{jk}\mathfrak{D}_2,
\end{split}
\end{equation*}

\begin{equation*}
\begin{split}
& \mathcal{L}_1^2\mathcal{L}_j\phi_{2k}|_{(0,0)} = 2 \cdot \frac{ 2i
\sqrt{\mu_1}}{\sqrt{\mu_2}}\delta_{jk}\ov{e_{2,11}} + 4i \chi_j
\bigg(2i(\frac{1}{\mu_1} \mu_{11} \ov{e_{1,11}}e_{1,2k}
+ \frac{1}{\mu_2} \mu_{11} \ov{e_{2,11}} e_{2,2k}) \bigg) \\
& + 4i\chi_1 \bigg(2i(\frac{1}{\mu_1} \mu_{1j} \ov{e_{1,1j}}e_{1,2k}
+ \frac{1}{\mu_2} \mu_{1j} \ov{e_{2,1j}} e_{2,2k})
- \frac{2i}{\sqrt{\mu_2}} \delta_{jk}f_2^{(I_1+2I_n)}\bigg)\\
& - 16 \chi_1 \chi_j \bigg(-\frac{2i}{\mu_1} f_1^{(I_1+2I_n)}
e_{1,2k}
- \frac{2i}{\mu_2} f_2^{(I_1+2I_n)} e_{2,2k}\bigg)\\
& = - \frac{2i}{\mu_1}e_{1,2k} (\mathfrak{D}_1 + \mathfrak{D}_2) -
\frac{2i}{\mu_2} e_{2,2k}(\mathfrak{D}_3 + \mathfrak{D}_4)
 - \frac{1}{\sqrt{\mu_2} \chi_j} \delta_{jk}\mathfrak{D}_4,
\end{split}
\end{equation*}

\begin{equation*}
\begin{split}
& \mathcal{L}_1^2\mathcal{L}_j\phi_{33}|_{(0,0)} = 2 \cdot
\frac{2\mu_1}{\sqrt{\mu_2(\mu_1+\mu_2)}} \ov{e_{2,1j}}\\
& + 4i \chi_j \bigg(2i(\frac{1}{\mu_1} \mu_{11}
\ov{e_{1,11}}e_{1,33} + \frac{1}{\mu_2} \mu_{11} \ov{e_{2,11}}
e_{2,33}) - \frac{2 \mu_1}{\sqrt{\mu_1 \mu_2(\mu_1+\mu_2)}}
f_2^{(I_1+2I_n)}
\bigg) \\
& + 4i\chi_1 \bigg(2i(\frac{1}{\mu_1} \mu_{1j} \ov{e_{1,1j}}e_{1,33}
+ \frac{1}{\mu_2} \mu_{1j} \ov{e_{2,1j}} e_{2,33})\bigg)\\
& - 16 \chi_1 \chi_j \bigg(-\frac{2i}{\mu_1} f_1^{(I_1+2I_n)}
e_{1,33}
- \frac{2i}{\mu_2} f_2^{(I_1+2I_n)} e_{2,33}\bigg)\\
& = - \frac{2i}{\mu_1}e_{1,33} (\mathfrak{D}_1 + \mathfrak{D}_2) -
\frac{2i}{\mu_2} e_{2,33}(\mathfrak{D}_3 + \mathfrak{D}_4) + \frac{i
\sqrt{\mu_1}}{\sqrt{\mu_2(\mu_1+\mu_2)} \chi_1} \mathfrak{D}_3.
\end{split}
\end{equation*}

We write $\mathfrak{B}_2:=(Id - \mathfrak{G}_1) \mathfrak{B}_1$ and
$\mathfrak{A}_2:=(Id - \mathfrak{G}_1) \mathfrak{A}_1$ so that
\[
\mathfrak{B}_2 \ov\phi(\chi, 0)^t = \mathfrak{A}_2
\]
where
\[
\mathfrak{G}_1:=
\begin{pmatrix}
0 & 0 & 0 & \frac{2\chi_1}{\chi_3} & 0 & 0 & 0 & 0 \\
0 & 0 & 0 & \frac{\chi_2}{\chi_3} & 0 & \frac{\chi_1}{\chi_3} & 0 & 0 \\
0 & 0 & 0 & 0 & 0 & \frac{2\chi_2}{\chi_3} & 0 & 0 \\
0 & 0 & 0 & 0 & 0 & 0 & 0 & 0 \\
0 & 0 & 0 & \frac{\chi_\alpha}{\chi_3} & 0 & 0 & 0 & 0 \\
0 & 0 & 0 & 0 & 0 & 0 & 0 & 0 \\
0 & 0 & 0 & 0 & 0 & \frac{\chi_\alpha}{\chi_3} & 0 & 0 \\
0 & 0 & 0 & -\frac{\mathfrak{D}_1+\mathfrak{D}_2}{\mu_1 \chi_3} & 0
&
-\frac{\mathfrak{D}_3+\mathfrak{D}_4}{\mu_2 \chi_3} & 0 & 0 \\
\end{pmatrix}.
\]
By the construction of $\mathfrak{B}_2$ and $\mathfrak{A}_2$, we see
that (\ref{rel})(\ref{e1}) and (\ref{e2}) still hold. We further
calculate:
\begin{equation}
\begin{split}
&\mathfrak{B}_{2,33} = \mathfrak{B}_{1,33} + \frac{B_{1,13}}{\mu_1
\chi_3} (\mathfrak{D}_1 + \mathfrak{D}_2) +  \frac{B_{1,23}}{\mu_2
\chi_3} (\mathfrak{D}_3 + \mathfrak{D}_4) = \bigg(-\frac{1}{\mu_{1j}
\chi_1}, \mathfrak{D}_1\ - \frac{1}{\mu_{12}
    \chi_1} \mathfrak{D}_3,\ 0,\\
&\  - \frac{1}{\sqrt{\mu_1} \chi_3} \mathfrak{D}_2 +
\frac{\mathfrak{D}_1+\mathfrak{D}_2}{\sqrt{\mu_1} \chi_3}, 0, ...,
0,\ -\frac{1}{\sqrt{\mu_2} \chi_3} \mathfrak{D}_4 +
\frac{\mathfrak{D}_3+\mathfrak{D}_4}{\sqrt{\mu_2} \chi_3}, 0, ...,
0,\ \frac{i \sqrt{\mu_1}}{\sqrt{\mu_2(\mu_1+\mu_2)} \chi_1}
\mathfrak{D}_3 \bigg)
\end{split}
\end{equation}
and
\begin{equation*}\begin{split}
& \mathfrak{A}_{2,33} =
-\chi_1(\mathfrak{D}_1+\mathfrak{D}_2)-\chi_2(\mathfrak{D}_3+\mathfrak{D}_4)
+\frac{\mu_1\chi_1\chi_3}{\mu_1\chi_3}(\mathfrak{D}_1+\mathfrak{D}_2)
+\frac{\mu_2\chi_2\chi_3}{\mu_2\chi_3}(\mathfrak{D}_3+\mathfrak{D}_4)
= 0.
\end{split}\end{equation*}

\medspace

We turn to $\mathfrak{B}_{2,33} \ov\phi(\chi, 0)^t =
\mathfrak{A}_{2,33}$ to have
\begin{equation}\begin{split}
&  -\frac{\mathfrak{D}_1}{\sqrt{\mu_1}\chi_1} \ov{\phi_{1111}}
-\frac{\mathfrak{D}_3}{\mu_{12}\chi_1}\ov{\phi_{12}} -
\bigg(\frac{\mathfrak{D}_2}{\sqrt{\mu_1}\chi_3}
- \frac{\mathfrak{D}_1+\mathfrak{D}_2}{\sqrt{\mu_1} \chi_3}\bigg)\ov{\phi_{13}}\\
&\ \ - \bigg(\frac{\mathfrak{D}_4}{\sqrt{\mu_2}\chi_3} -
\frac{\mathfrak{D}_3+\mathfrak{D}_4}{\sqrt{\mu_2} \chi_3}
\bigg)\ov{\phi_{23}} + \frac{i \sqrt{\mu_1}
\mathfrak{D}_3}{\sqrt{\mu_2(\mu_1+\mu_2)}\chi_1}\ov{\phi_{33}}=0.
\end{split}\end{equation}
Substituting (\ref{rel}) to this equation, we obtain at $(\chi, 0)$
\begin{equation*}\begin{split}
& -\frac{\mathfrak{D}_1}{\sqrt{\mu_1}\chi_1}
\frac{\chi_1}{\chi_3}\ov{\phi_{13}}
-\frac{\mathfrak{D}_3}{\sqrt{\mu_1+\mu_2}\chi_2}
\bigg(\frac{\sqrt{\mu_1}}{\sqrt{\mu_1+\mu_2}} \frac{\chi_2}{\chi_3}
\ov{\phi_{13}} + \frac{\sqrt{\mu_2}}{\sqrt{\mu_1+\mu_2}}
\frac{\chi_1}{\chi_3} \ov{\phi_{23}}\bigg) \\
& - \bigg(\frac{\mathfrak{D}_2}{\sqrt{\mu_1}\chi_3} -
\frac{\mathfrak{D}_1+\mathfrak{D}_2}{\sqrt{\mu_1} \chi_3}\bigg)
\ov{\phi_{13}} - \bigg(\frac{\mathfrak{D}_4}{\sqrt{\mu_2}\chi_3} -
\frac{\mathfrak{D}_3+\mathfrak{D}_4}{\sqrt{\mu_2}
\chi_3}\bigg)\ov{\phi_{23}} + \frac{i \sqrt{\mu_1}
\mathfrak{D}_3}{\sqrt{\mu_2(\mu_1+\mu_2)} \chi_1} \ov{\phi_{33}} =
0.
\end{split}\end{equation*}

A quick simplification gives at $(\chi, 0)$
\begin{equation}\begin{split}
& - \frac{\mu_1 \mathfrak{D}_3 }{\sqrt{\mu_1}(\mu_1+\mu_2)
\chi_3}\ov{\phi_{13}} + \frac{\mu_1
\mathfrak{D}_3}{\sqrt{\mu_2}(\mu_1+\mu_2)\chi_3}\ov{\phi_{23}} +
\frac{i \sqrt{\mu_1} \mathfrak{D}_3}{\sqrt{\mu_2(\mu_1+\mu_2)}
\chi_2}\ov{\phi_{33}} =0.
\end{split}\end{equation}
Since $\phi^{(2I_1+I_j)}_{33}\neq 0$ in Case D, the polynomial
$\mu_1 \mathfrak{D}_3\neq 0$ because of (\ref{phi 1}). Hence we
obtain the same formula (\ref{33 eq}). As in the case A, we can
further get $deg(F)\le 3$. The proof for the case D$'$ is similar to
the case D.
\end{proof}

\medspace

Therefore the proofs of Proposition \ref{propdeg} and Theorem
\ref{mainthm} are complete.

\medskip

\bibliographystyle{amsalpha}

\bigskip \bigskip

\noindent Shanyu Ji (shanyuji@math.uh.edu), Department of
Mathematics, University of Houston, Houston, TX 77204, USA.\

\medskip

\noindent Wanke Yin (wankeyin@whu.edu.cn),  School of Mathematics
and Statistics, Wuhan University, Hubei 430072, China.

\end{document}